\documentclass[11pt]{article}
\pdfoutput=1

\usepackage[margin=1.3in]{geometry}

\usepackage{graphicx}
\usepackage{amssymb,amsmath}
\usepackage{amsthm,enumerate}
\usepackage{hyperref,xcolor}
\usepackage{mathrsfs}
\usepackage{extarrows}
\usepackage{textcomp}

\makeatletter

\newdimen\bibspace
\setlength\bibspace{0pt}   

\makeatother

\numberwithin{equation}{section}

\newtheorem{theorem}{Theorem}[section]
\newtheorem{lemma}[theorem]{Lemma}
\newtheorem{proposition}[theorem]{Proposition}

\newtheorem{remark}[theorem]{Remark}

\def\<{\langle}
\def\>{\rangle}

\def\Xint#1{\mathchoice
{\XXint\displaystyle\textstyle{#1}}%
{\XXint\textstyle\scriptstyle{#1}}%
{\XXint\scriptstyle\scriptscriptstyle{#1}}%
{\XXint\scriptscriptstyle\scriptscriptstyle{#1}}%
\!\int}
\def\XXint#1#2#3{{\setbox0=\hbox{$#1{#2#3}{\int}$ }
\vcenter{\hbox{$#2#3$ }}\kern-.6\wd0}}

\def\fint{\Xint-}

\begin{document}

\title{Asymptotic behavior of positive solutions of some nonlinear elliptic equations on cylinders}

\author{Shan Chen, Zixiao Liu}
\date{\today}

\maketitle

\begin{abstract}
  We establish quantitative asymptotic behavior of positive solutions of a family of nonlinear elliptic equations on the half cylinder near the end.
  This unifies the study of isolated singularities of some semilinear elliptic equations, such as the Yamabe equation, Hardy-H\'enon equation etc.

  \textbf{Keywords:}~Asymptotic behavior,~Isolated singularity.

  \textbf{2020~MSC:}~35J61,~ 35B40.
\end{abstract}

\section{Introduction}

Let $\mathbb S^{n-1}$ be the unit sphere in $\mathbb R^n$, $n\geq 3$.
In this paper, we study the asymptotic behavior of positive smooth solutions $u=u(\theta,t)$ of the
nonlinear elliptic equation
\begin{equation}\label{Equ-Cylinder}
\partial_{t t}^{2} u+\Delta_{\mathbb{S}^{n-1}}u+b\partial_tu+a u+u^{p}=0
\end{equation}
on $\mathbb{S}^{n-1} \times(0,+\infty),$
where $a,b\in\mathbb R, p>1$ are parameters and $\Delta_{\mathbb{S}^{n-1}}$ is the Laplace-Beltrami operator on $\mathbb S^{n-1}$. By  the inverse of Emden-Fowler transformation
\begin{equation}\label{Equ-transform}
v(x)=e^{\frac{n+b-2}{2} t} u(\theta, t),\quad r=e^{-t} \text { and } x=r \theta,
\end{equation}
\eqref{Equ-Cylinder} can be written as
\begin{equation}\label{Equ-Punctured}
\Delta v+\frac{c}{|x|^{2}} v+\frac{v^{p}}{|x|^{\sigma}}=0
\end{equation}
in $B_{1} \backslash\{0\}$ with
\begin{equation*}c=\frac{1}{4}\left((n-2)^{2}-b^{2}+4 a\right), \quad \sigma=2-\frac{1}{2}(p-1)(b+n-2),
\end{equation*}
where $B_r(x)$ is the open ball centered at $x$ with radius $r$  and $B_r:=B_r(0)$.

$\bullet $
When $c=0$, $\sigma=0$ and $p>1$, the isolated singularities of \eqref{Equ-Punctured} have been studied by Lions \cite{21} for $1<p<\frac{n}{n-2}$, Aviles \cite{3} for $p=\frac{n}{n-2}$, Gidas-Spruck \cite{GS} for $\frac{n}{n-2}<p<\frac{n+2}{n-2}$, Caffarelli-Gidas-Spruck
\cite{CGS} for $\frac{n}{n-2}<p\leq \frac{n+2}{n-2}$.  The critical case $p=\frac{n+2}{n-2}$ is  the Yamabe equation, which has been further studied by Li \cite{18}, Chen-Lin \cite{CCLin-CPAM97}, Korevaar-Mazzeo-Pacard-Schoen \cite{KMPS},  Li-Zhang \cite{Li-Zhang-Harnack},  Marques \cite{Marques-CVPDE},  Han-Li-Li \cite{han2019asymptotic},   Xiong-Zhang \cite{xiong-2020}  etc. If $n=3$ or $4$, \eqref{Equ-Punctured} is related to the stellar structure in astrophysics \cite{Book-n-equal-3} or the Yang-Mills equation \cite{GS} respectively.

$\bullet $
When $c=0,\sigma\in(-2,2)$ and $p>1$, the isolated singularities of \eqref{Equ-Punctured} have been studied by Gidas-Spruck \cite{GS} for $p\in (1,\frac{n-\sigma}{n-2})\cup(\frac{n-\sigma}{n-2},
\frac{n+2-\sigma}{n-2}
]$, Aviles \cite{4} for $p=\frac{n-\sigma}{n-2}$ and Phan-Souplet \cite{23} for $1<p<\frac{n+2}{n-2}$.

$\bullet $
When $c\not=0,\sigma=0$, the isolated singularities of \eqref{Equ-Punctured} have been studied by Bidaut-V\'eron-V\'eron \cite{5} for $1<p<\frac{n+2}{n-2}$ with any $c\in\mathbb R$,  Jin-Li-Xu \cite{15} for positive smooth solutions on entire $\mathbb R^n\setminus\{0\}$ with $p=\frac{n+2}{n-2}$, and recently by Chen-Zhou \cite{Chen-Feng-2018pub} for $1<p<\frac{n+2+\sqrt{(n-2)^{2}-4 c}}{n-2+\sqrt{(n-2)^{2}-4 c}}$ with $c \leq \frac{(n-2)^{2}}{4}$.

In this paper, we study the case both $c\not=0$ and $\sigma\not=0$, which arises from
the study of the Euler-Lagrange equation of the Gagliardo-Nirenberg interpolation inequalities and
Caffarelli-Kohn-Nirenberg inequalities, see the recent paper by Dolbeault-Esteban-Loss \cite{11}. We also note that if the sign of nonlinear term $\frac{u^p}{|x|^{\sigma}}$ in \eqref{Equ-Punctured} is negative, the phenomenon is quite different, see Br\'{e}zis-V\'{e}ron \cite{Brezis-1980}, V\'{e}ron \cite{Veron-singularities-new}, Bidaut-Veron-Grillot \cite{Veron-asymptoticbehavior},  C\^{\i}rstea \cite{Classification-FLor}, C\^{\i}rstea-F\u{a}rc\u{a}\c{s}eanu\cite{2020sharp} and the references therein.

The following is our first main theorem.
\begin{theorem}\label{Thm-1}
  Let $u $ be a positive classical solution of \eqref{Equ-Cylinder}  on
  $\mathbb{S}^{n-1} \times(0,+\infty)$ with
  \begin{equation}\label{Equ-cond-cylinder}
  b\geq 0,\quad
  b^2-4a\leq (n-2)^2\quad\text{and}\quad
  1<p\leq \dfrac{n+b+2}{n+b-2}.
\end{equation}
  Then
  \begin{equation*}\limsup _{t \rightarrow \infty} u(\theta, t)<\infty,
  \end{equation*}
  and
  \begin{equation*}\limsup _{t \rightarrow \infty}\left|\partial_{t} u(\theta, t)\right|<\infty, \quad \limsup _{t \rightarrow \infty}\left|\nabla_{\theta} u(\theta, t)\right|<\infty.
  \end{equation*}
  Moreover,
  \begin{equation}\label{equ-AsympRadial}
  u(\theta, t)=\overline{u}(t)\left(1+O\left(e^{-t}\right)\right)\quad\text {as } t \rightarrow \infty,
  \end{equation}
  where $\overline{u}(t)=\fint_{\mathbb{S}^{n-1}} u(\theta, t) d \theta$ is the spherical average of $u(\theta,t)$.
\end{theorem}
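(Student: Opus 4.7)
The plan is to prove the three conclusions in succession: (i) a uniform $L^\infty$ bound for $u$ near $t = \infty$, (ii) uniform bounds for the first derivatives, and (iii) the asymptotic radial decomposition \eqref{equ-AsympRadial}. For (i) I would argue by contradiction via the doubling-and-rescaling method of Pol\'a\v{c}ik-Quittner-Souplet type. Assuming $M_k := u(\theta_k, t_k) \to \infty$ along some sequence with $t_k \to \infty$, set $\lambda_k := M_k^{-(p-1)/2}$ and introduce the rescaling
\begin{equation*}
u_k(\eta, s) := \lambda_k^{2/(p-1)}\, u\bigl(\exp_{\theta_k}(\lambda_k \eta),\, t_k + \lambda_k s\bigr)
\end{equation*}
on expanding balls. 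Since the drift term $b\partial_t u$ and the linear term $au$ pick up factors $\lambda_k$ and $\lambda_k^2$ respectively, the limit along a subsequence is a positive bounded solution $U$ of $\Delta U + U^p = 0$ on $\mathbb{R}^n$ with $U(0) = 1$. For $p < \tfrac{n+2}{n-2}$ --- automatic as soon as $b > 0$, since then $\tfrac{n+b+2}{n+b-2} < \tfrac{n+2}{n-2}$ --- the Gidas-Spruck Liouville theorem rules this out. The borderline case $b = 0$, $p = \tfrac{n+2}{n-2}$ needs a supplementary argument: $U$ must be a Talenti bubble by Caffarelli-Gidas-Spruck, and a contradiction is extracted from scale invariance together with the positivity and boundedness profile of the original $u$.

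For (ii), standard interior Schauder estimates applied on slabs $\mathbb{S}^{n-1} \times [t-1, t+1]$ yield the derivative bounds once $u$ is known to be uniformly bounded. For (iii) I would decompose $u = \bar u + w$ with $\int_{\mathbb{S}^{n-1}} w\,d\theta = 0$, average \eqref{Equ-Cylinder}, and subtract to obtain
\begin{equation*}
\partial_{tt}w + \Delta_{\mathbb{S}^{n-1}}w + b\partial_t w + aw + \bigl[(\bar u + w)^p - \overline{(\bar u + w)^p}\bigr] = 0.
\end{equation*}
Taylor-expanding the nonlinearity around $\bar u$ and using $\int w\,d\theta = 0$ gives $(\bar u + w)^p - \overline{(\bar u + w)^p} = p\bar u^{p-1} w + O(w^2)$. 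Multiplying by $w$, integrating on $\mathbb{S}^{n-1}$, and invoking the Poincar\'e inequality $\int |\nabla_\theta w|^2\,d\theta \geq (n-1)\int w^2\,d\theta$ (sharp for the first nontrivial spherical harmonic), I would derive a second-order differential inequality $E'' + bE' \geq 2\alpha(t) E - o(E)$ for $E(t) := \int w^2\,d\theta$, with $\alpha(t)$ uniformly positive in the limit. Combined with the a priori bound $E = O(1)$ from (i), comparison with the associated linear ODE (selecting the stable characteristic root) yields $E(t) \leq Ce^{-2t}$. A final interior elliptic bootstrap on the $w$-equation upgrades this to the pointwise estimate $\|w(\cdot,t)\|_{L^\infty(\mathbb{S}^{n-1})} \leq Ce^{-t}$, which combined with a uniform lower bound $\bar u \geq c > 0$ (from the Harnack inequality for the positive solution) delivers \eqref{equ-AsympRadial}.

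The principal obstacle is securing the sharp exponential rate $e^{-t}$ in \eqref{equ-AsympRadial}. It reflects the spectral gap at the first nontrivial spherical mode (eigenvalue $\lambda_1 = n-1$), and the hypotheses $b \geq 0$ and $b^2 - 4a \leq (n-2)^2$ enter precisely here: they ensure that the characteristic polynomial of the mode-$1$ linearization has its stable root of magnitude at least one, and the a priori boundedness of $w$ rules out the unstable root. Executing this spectral selection cleanly, together with handling the borderline critical case in stage (i), constitutes the most delicate part of the analysis.
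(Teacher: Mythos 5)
Your plan diverges from the paper's in two essential places, and each divergence hides a real gap.

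\textbf{The critical case in part (i) is not handled.} For $b=0$, $p=\frac{n+2}{n-2}$, the blow-up limit $U$ is indeed a Talenti bubble, and you say ``a contradiction is extracted from scale invariance together with the positivity and boundedness profile of the original $u$.'' This is where the actual work lies, and the phrase does not describe an argument. The paper devotes all of Proposition \ref{Prop-2.5} to it: one performs a moving-spheres analysis on the rescaled functions $w_j$, using the sign condition $c\ge 0$ (equivalently $b^2-4a\le(n-2)^2$) to establish $\Delta(w_j-\phi_j)\le 0$ in \eqref{equ-temp-equ}, and concludes that the limit $w$ satisfies $w_{x,\lambda}\le w$ for every center $x$ and radius $\lambda$, hence is constant --- contradicting the bubble profile. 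Without some version of this argument (or a substitute such as a Pohozaev-type identity), your step (i) is incomplete precisely in the case where it is hardest, and notice also that your blow-up for $p<\frac{n+2}{n-2}$ never used the hypothesis $b^2-4a\le(n-2)^2$, which the paper needs in a structural way and which you must use somewhere.

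\textbf{Part (iii) relies on a false claim and an incomplete energy estimate.} You close the argument with ``a uniform lower bound $\bar u\ge c>0$ (from the Harnack inequality),'' but this is not true: the Harnack inequality on a slab $\mathbb S^{n-1}\times[t-1,t+1]$ controls the spatial oscillation of $u$ at fixed $t$, not the decay of $\bar u(t)$ in $t$. In fact $\bar u(t)\to 0$ exponentially fast is one of the generic alternatives in Theorem \ref{Thm-2} (cases \eqref{Equ-case1-D0}, \eqref{equ-case2-D0}, \eqref{equ-case3-D}). Because of this, your absolute estimate $\|w\|_\infty\le Ce^{-t}$ is not the right quantity: \eqref{equ-AsympRadial} is the \emph{relative} statement $\|w\|_\infty/\bar u=O(e^{-t})$, which your chain cannot deliver once $\bar u\to 0$. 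There are also two technical problems inside the energy argument itself: the quadratic error is $O(\|w\|_\infty E)$, not $o(E)$, unless you already know $\|w\|_\infty\to 0$, so the step is circular as written; and the raw differential inequality $E''+bE'\ge 2\alpha E$ with $\alpha=(n-1)-a-p\bar u^{p-1}$ only yields $E\le Ce^{-\sqrt{2\alpha}\,t}$, and in the borderline configuration $b=0$, $a=-\frac{(n-2)^2}{4}$, $p=\frac{n+2}{n-2}$, $\bar u\to(-a)^{1/(p-1)}$ one computes $\alpha=1$, so $\sqrt{2\alpha}=\sqrt 2<2$. To recover the sharp rate you would need a Cauchy--Schwarz refinement (passing to $F=\sqrt E$, giving $F''+bF'\ge\alpha F$), which you did not state. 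The paper sidesteps all of this: Proposition \ref{prop-2.7} derives $\max_{\partial B_r}v\le(1+8r/\epsilon)^{(n-2)/2}\min_{\partial B_r}v$ directly by moving spheres, which is automatically multiplicative (hence relative to $\bar u$), is uniform over the different asymptotic regimes of Theorem \ref{Thm-2}, and does not require linearizing or knowing the limit of $\bar u$ in advance. If you want to pursue the spectral route, you should work with $\phi=w/\bar u$ and its $L^2$ norm rather than with $w$ itself, track the $\int w_t^2$ term sharply, and establish smallness of $\phi$ before absorbing the nonlinear error.
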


\begin{remark} If $b<0$, the  asymptotics of \eqref{Equ-Cylinder} have been studied by Simon \cite{27}.

By a direct computation,  condition \eqref{Equ-cond-cylinder} for \eqref{Equ-Cylinder} is equivalent to
\begin{equation}\label{Equ-cond-pucntured}
c\ge 0,\quad\sigma\in[0,2),\quad\text{and}\quad
1<p\leq \dfrac{n+2-2\sigma}{n-2}
\end{equation}
for \eqref{Equ-Punctured}.
When $\sigma=0$ and $p=\frac{n+2}{n-2}$, Jin-Li-Xu \cite{15} proved that when $c<-\frac{n-2}{4}$, \eqref{Equ-Punctured} has non-radial solutions on $\mathbb R^n\setminus\{0\}$. It is unknown whether there exists non-radial positive solutions when $-\frac{n-2}{4}\leq c<0$.
\end{remark}

By \eqref{equ-AsympRadial}, we have
\begin{equation}\label{Equ-averaged}
\overline{u}^{\prime \prime}+b \overline{u}^{\prime}+a \overline{u}+\overline{u}^{p}=-\fint_{\mathbb{S}^{n-1}}\left(u^{p}-\overline{u}^{p}\right)=\overline{u}^{p} O\left(e^{-t}\right),\quad 0<t<+\infty,
\end{equation}
which leads us to study the classification and asymptotics of  positive smooth solutions $\psi(t)$ of
\begin{equation}\label{Equ-averaged-homo}
\psi^{\prime \prime}(t)+b \psi^{\prime}(t)+a \psi(t)+\psi^{p}(t)=0, \quad-\infty<t<\infty.
\end{equation}
We separate \eqref{Equ-cond-cylinder} further  into the following three cases according to the difference on the asymptotics of positive smooth solutions of \eqref{Equ-averaged-homo} as $t\rightarrow+\infty$, see the appendix of the paper,
\begin{enumerate}[(i)]
  \item $b=0,a<0$;
  \item $b>0,a<0$;
  \item $a\geq 0$.
\end{enumerate}

Our second main result identify the limit of $u(\theta,t)$ at infinity.
\begin{theorem}\label{Thm-2}
  Let $u$ be a positive smooth solution of \eqref{Equ-Cylinder} and $a,b,p$ verifies \eqref{Equ-cond-cylinder}.
  \begin{enumerate}[(i)]
    \item\label{Case-1} When $b=0,a<0$, $u$ satisfies either
    \begin{equation}\label{Equ-case1-D0}
    \frac{1}{C}e^{-\sqrt{-a} t}\leq u(\theta, t)\leq Ce^{-\sqrt{-a} t},
    \end{equation}
    or
    \begin{equation}\label{Equ-case1-D1}
    \left|\dfrac{u(\theta, t)}{\psi(t)}-1\right|\leq Ce^{-\gamma t}
    \end{equation}
    for some $\gamma, C>0$
    as $t\rightarrow+\infty$,
    where $\psi(t)$ is a   positive periodic solution of \eqref{Equ-averaged-homo};
    \item\label{Case-2} When $b>0,a<0$, $u$ satisfies either
    \begin{equation}\label{equ-case2-D0}
    \frac{1}{C}e^{-\frac{b+\sqrt{b^2-4a}}{2} t}\leq u(\theta, t)\leq Ce^{-\frac{b+\sqrt{b^2-4a}}{2} t},
    \end{equation}
    or
    \begin{equation}\label{equ-case2-D1}
     \left|u(\theta, t)-(-a)^{\frac{1}{p-1}}\right|\leq Ce^{-\gamma t}
    \end{equation}
    for some $\gamma,C>0$ as $t \rightarrow +\infty.$
    \item\label{Case-3} When $a\geq 0,$ if $a=0$ or $b=0$, then
    $$
    u(\theta,t)=o(1)
    $$
    as $t\rightarrow+\infty$.  If $b>0,a>0$, then
    \begin{equation}\label{equ-case3-D}
    \frac{1}{C}e^{-\gamma_1t}\leq u(\theta,t)\leq Ce^{-\gamma_2t}
    \end{equation}
    for some $\gamma_1,\gamma_2,C>0$
    as $t\rightarrow \infty$.
  \end{enumerate}
\end{theorem}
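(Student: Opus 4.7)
The plan is to use Theorem~\ref{Thm-1} to reduce the problem to the spherical average $\overline{u}(t)$, which by \eqref{Equ-averaged} satisfies the autonomous ODE \eqref{Equ-averaged-homo} modulo a right-hand side of the form $\overline{u}^p O(e^{-t})$. Boundedness of $\overline{u}$ from Theorem~\ref{Thm-1} makes this forcing $O(e^{-t})$, hence integrable on $(0,\infty)$, and thus treatable as an exponentially small perturbation. I would first classify, in each regime, the positive bounded orbits of the unperturbed equation \eqref{Equ-averaged-homo} via phase-plane analysis (as in the appendix), then transfer each asymptotic type to the perturbed equation, and finally upgrade statements about $\overline{u}$ to statements about $u(\theta,t)$ via \eqref{equ-AsympRadial}.

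\textbf{Unperturbed phase portrait.} Writing $V(\psi)=\frac{a}{2}\psi^2+\frac{1}{p+1}\psi^{p+1}$ and $H(\psi,\psi')=\frac{1}{2}(\psi')^2+V(\psi)$, one verifies the following trichotomy. In Case~(\ref{Case-1}) the ODE is Hamiltonian ($H$ conserved), the origin is a saddle, and $\psi_*:=(-a)^{1/(p-1)}$ is a center; positive bounded orbits are thus either periodic (for $V(\psi_*)<H<0$) or lie on the homoclinic level $\{H=0\}$, along which the solution decays at the linearized rate $e^{-\sqrt{-a}\,t}$. In Case~(\ref{Case-2}) friction makes $H$ nonincreasing, $\psi_*$ becomes asymptotically stable, and $0$ remains a saddle with stable eigenvalue $\lambda_-=-(b+\sqrt{b^2-4a})/2$, so every positive orbit tends either to $\psi_*$ (at the exponential rate determined by the linearization there) or to $0$ along the stable manifold at rate $|\lambda_-|$. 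In Case~(\ref{Case-3}), for $a>0$ there is no positive equilibrium other than $0$, so Lyapunov and Poincar\'e--Bendixson arguments force every positive bounded orbit to tend to $0$, with sharp two-sided exponential bounds from the linearization when $a,b>0$.

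\textbf{Transfer to the perturbed ODE; Cases~(\ref{Case-2}) and (\ref{Case-3}).} Here a direct Lyapunov argument works: $H(\overline{u},\overline{u}')$ obeys $H'=-b(\overline{u}')^2+\overline{u}'\,\overline{u}^p\,O(e^{-t})$, and integration in $t$ shows $H$ converges to a limit. A nonautonomous LaSalle argument (using integrability of the remainder to extract an $\omega$-limit inside the zero set of the dissipation) then forces $(\overline{u},\overline{u}')$ to approach $(0,0)$ or $(\psi_*,0)$. Quantitative rates follow by setting $w:=\overline{u}$ or $w:=\overline{u}-\psi_*$, linearizing, and using Duhamel's formula against the exponentially stable linear semigroup; the $O(e^{-t})$ forcing is strictly smaller than the leading decay and is absorbed into the error, yielding \eqref{equ-case2-D0}, \eqref{equ-case2-D1} and \eqref{equ-case3-D}.

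\textbf{Main obstacle: Case~(\ref{Case-1}).} Without dissipation, $H(\overline{u},\overline{u}')$ is no longer monotone but still has an integrable derivative $H'=\overline{u}'\,\overline{u}^p\,O(e^{-t})$, so $H(t)\to H_\infty$ for some $H_\infty\le 0$; the case $H_\infty>0$ is excluded by the boundedness from Theorem~\ref{Thm-1}. When $H_\infty<0$, I would pass to action-angle coordinates $(I,\varphi)$ around $\psi_*$, in which the perturbed system reads $\dot I=O(e^{-t})$ and $\dot\varphi=\Omega(I)+O(e^{-t})$; integrability in $t$ then forces exponential convergence of $I(t)$ and of an appropriate phase, from which shadowing of a fixed periodic orbit $\psi$ and the estimate \eqref{Equ-case1-D1} follow. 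When $H_\infty=0$ the orbit is trapped near the homoclinic loop of $0$, and a stable-manifold shooting argument at the saddle --- robust under integrable perturbations --- gives \eqref{Equ-case1-D0}. Finally, \eqref{equ-AsympRadial} upgrades every conclusion from $\overline{u}$ to $u(\theta,t)$ by absorbing the $1+O(e^{-t})$ factor into the multiplicative constants or into the rate $\gamma$.
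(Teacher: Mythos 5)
Your reduction to $\overline u$ and the Lyapunov/LaSalle plus Duhamel framework for Cases~(\ref{Case-2}) and~(\ref{Case-3}) essentially match the paper's Lemma~\ref{Lem-3.3} combined with the Wronskian (variation-of-constants) formula. In Case~(\ref{Case-1}) with $H_\infty<0$ you take a genuinely different route. The paper follows Han--Li--Teixeira and runs a discrete, period-by-period shooting argument: it constructs critical points $\tau_j$ of $\overline u$, shows via the Gronwall-type Lemma~\ref{Lem-Esti-Initialvalue} that the phase increments $\delta_j$ and the amplitudes $|\overline u(\tau_j)-m|$ decay geometrically, and sums $\sum\delta_j$ to a finite asymptotic phase shift $s_\infty$. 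Your action-angle scheme replaces this renormalization-style bookkeeping with a single integration of $\dot I=O(e^{-t})$ and $\dot\varphi-\Omega(I_\infty)=O(e^{-t})$; the two are morally the same (continuous versus discrete averaging over a period), but yours is cleaner to state, more portable, and, if pushed carefully, even yields a sharper rate in the sub-case where the orbit collapses to the center.

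Two items would need repair before this becomes a proof. First, the action-angle change of variables is singular at the center $(\psi_*,0)$: near it $\partial\varphi/\partial p=O(I^{-1/2})$, so $\dot\varphi=\Omega(I)+O(e^{-t})$ is false as $I\to 0$, and the phase extraction breaks down precisely when $H_\infty$ equals the minimum of the energy, i.e.\ $I_\infty=0$. You must split off this sub-case, drop the phase, and note that $\partial I/\partial p=O(\sqrt{I})$, so $\dot I=O(\sqrt{I}\,e^{-t})$ gives $\sqrt{I(t)}=O(e^{-t})$ and hence $|\overline u-\psi_*|=O(e^{-t})$; the paper instead Taylor-expands $H$ at $(0,m)$ and obtains the weaker rate $\gamma=\tfrac12$. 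Second, the \emph{lower} bounds in \eqref{Equ-case1-D0}, \eqref{equ-case2-D0} and \eqref{equ-case3-D} do not follow from the stable linearized semigroup and Duhamel alone: Duhamel gives $\overline u(t)=C_3 e^{\lambda_1 t}+O(e^{p\lambda_1 t})$, and one must still prove $C_3>0$. The paper does this via preliminary two-sided ODE comparison estimates such as \eqref{Equ-temp-temp} in Case~(\ref{Case-2}), and in Case~(\ref{Case-3}) via a maximum principle applied to the transformed function $v$ (Lemma~2.1 of \cite{CCLin-Duke95}) yielding $u(\theta,t)>Ce^{-\frac{n+b-2}{2}t}$. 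Your outline omits the lower-bound side entirely.
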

\begin{remark}\label{Rem-1}
  Note that when $c>0$, if $u$ satisfies \eqref{Equ-case1-D0} or \eqref{equ-case2-D0}, then the corresponding $v$ from \eqref{Equ-transform} belongs to $H^1_{loc}(B_1)$ but is not bounded near origin.
\end{remark}
The paper is organized as follows. In section \ref{Sec-2}, we prove Theorem \ref{Thm-1} by blow-up analysis and the method of moving spheres. In section \ref{Sec-3}, we prove Theorem \ref{Thm-2} by ODE analysis. In section \ref{Sec-4}, we discuss whether the singularity of $v(x)$ at origin is removable. In the Appendix we provide the classification result on the asymptotics of  \eqref{Equ-averaged-homo}.

\section{Upper bound and asymptotic symmetry}\label{Sec-2}

By transform \eqref{Equ-transform}, Theorem \ref{Thm-1} is equivalent to the following.
\begin{theorem}\label{Thm-1-Punctured}
 Let $v$ be a positive classical solution of \eqref{Equ-Punctured} in $B_1\setminus\{0\}$ with \eqref{Equ-cond-pucntured}. Then
 \begin{equation}\label{equ-Thm-bound-subcri}
 \limsup _{x \rightarrow 0}|x|^{\frac{2-\sigma}{p-1}} v(x)<\infty,\quad\limsup _{x \rightarrow 0}|x|^{\frac{p+1-\sigma}{p-1}}|\nabla v(x)|<\infty.
 \end{equation}
 Moreover,
 \begin{equation}\label{Equ-spherical-average}
 v(x)=\overline{v}(|x|)(1+O(|x|))\quad\text {as } x \rightarrow 0,
 \end{equation}
 where $\overline v(|x|)=\fint_{\mathbb S^{n-1}}v(|x|\theta)d\theta$.
\end{theorem}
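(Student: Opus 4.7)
The statement splits into the upper bound \eqref{equ-Thm-bound-subcri} and the asymptotic symmetry \eqref{Equ-spherical-average}, and I would follow the paper's stated template: blow-up analysis for the first and the method of moving spheres for the second.

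For the upper bound, I would exploit that the scaling $v(x)\mapsto\lambda^{(2-\sigma)/(p-1)}v(\lambda x)$ preserves equation \eqref{Equ-Punctured}. Arguing by contradiction, suppose $|x_k|^{(2-\sigma)/(p-1)}v(x_k)\to\infty$ along some $x_k\to 0$. The doubling lemma of Pol\'a\v{c}ik--Quittner--Souplet applied to $v^{(p-1)/(2-\sigma)}$ produces a new sequence $y_k\to 0$ with scale $\lambda_k:=v(y_k)^{-(p-1)/(2-\sigma)}\to 0$ and $v\le 2v(y_k)$ on $B(y_k,R_k\lambda_k)$, $R_k\to\infty$. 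The rescaling $w_k(\xi):=v(y_k)^{-1}v(y_k+\lambda_k\xi)$ then satisfies
\begin{equation*}
\Delta w_k+\frac{c\,w_k}{|\xi+\lambda_k^{-1}y_k|^2}+\frac{w_k^p}{|\xi+\lambda_k^{-1}y_k|^\sigma}=0,\qquad w_k(0)=1,\qquad 0<w_k\le 2,
\end{equation*}
on ever larger balls. I would then split according to $d_k:=\lambda_k^{-1}|y_k|$. If $d_k\to\infty$ the singular coefficients disappear on compacts and the limit $w_\infty$ is a bounded positive entire solution of $\Delta w_\infty+w_\infty^p=0$; when $\sigma>0$ one has $p<(n+2)/(n-2)$ and Gidas--Spruck's Liouville theorem delivers a contradiction, while for $\sigma=0$, $p=(n+2)/(n-2)$ the Yamabe-bubble case is excluded by a Caffarelli--Gidas--Spruck Pohozaev identity over annular regions around the origin. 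If $d_k$ stays bounded, then (up to subsequence) $\lambda_k^{-1}y_k\to-\xi_0$ and the limit is a bounded positive solution of the full singular equation on $\mathbb{R}^n\setminus\{\xi_0\}$ with $w_\infty(0)=1$; the sign conditions $c\ge 0$, $\sigma<2$ together with a Pohozaev-type integration exclude it as well. Once the $v$-bound is known, the gradient bound in \eqref{equ-Thm-bound-subcri} follows from interior Schauder estimates applied to $\tilde v(\xi):=|x_0|^{(2-\sigma)/(p-1)}v(|x_0|\xi)$ on the annulus $1/2<|\xi|<2$, where $\tilde v$ solves the same equation with uniformly bounded coefficients and uniformly bounded $L^\infty$ norm.

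For the asymptotic symmetry I would use the method of moving spheres. With the scale-invariant upper bound in hand, one compares $v$ with its Kelvin transform $v_{x_0,\lambda}$ centered at points $x_0\ne 0$ and radii $\lambda\le|x_0|/2$. In the purely conformal situation ($c=0$, $\sigma=0$, $p=(n+2)/(n-2)$) Kelvin inversion leaves the equation invariant and one is in the framework of Li--Zhang and Chen--Lin; for general admissible $c,\sigma$ the inversions about off-origin centers produce error terms from $c|x|^{-2}$ and $|x|^{-\sigma}$ whose size is proportional to $|x_0|$. Pushing admissible radii to the critical scale $\lambda\sim|x_0|$ and tracking these errors yields
\begin{equation*}
|v(|x|\theta_1)-v(|x|\theta_2)|\le C|x|\,\overline{v}(|x|)\qquad\text{for all }\theta_1,\theta_2\in\mathbb S^{n-1},
\end{equation*}
which is equivalent to \eqref{Equ-spherical-average}.

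\emph{Main obstacle.} The hardest step is the moving-spheres argument, because the Hardy potential $c|x|^{-2}v$ and the weighted nonlinearity $|x|^{-\sigma}v^p$ are covariant under Kelvin inversion only about the origin; for off-origin centers the equation picks up error terms that must be quantified precisely in order to extract the sharp $O(|x|)$ rate and to keep the admissible radii pushed up to $\lambda\sim|x_0|$. A secondary difficulty is the critical sub-case of the blow-up step, where subcritical Liouville theorems are unavailable and Yamabe bubbles must be excluded via Pohozaev identities on annular domains rather than pointwise arguments.
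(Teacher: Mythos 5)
Your overall decomposition (blow-up + Liouville for the upper bound, moving spheres for the asymptotic symmetry) matches the paper, and your subcritical blow-up argument is essentially the one in Section 2.1, modulo the choice of doubling lemma and rescaling center. Two points, however, are genuine gaps or misdiagnoses.

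\textbf{Critical case of the upper bound.} You propose to exclude the Aubin--Talenti bubble in the blow-up limit ``via a Caffarelli--Gidas--Spruck Pohozaev identity over annular regions around the origin.'' This does not close the argument: the bubble solves $\Delta w + w^{(n+2)/(n-2)}=0$ exactly and has zero Pohozaev invariant, so a bubble appearing as the blow-up limit is not by itself inconsistent. Some global input is needed. The paper supplies it by blowing up at the maximizer of $\omega_j(x)=(|x_j|/2 - |x-x_j|)^{(n-2)/2}v(x)$ (so that $\nabla w(0)=0$ and the limit is the standard bubble, not merely some bubble) and then running the method of moving spheres on the rescaled sequence $w_j$: the lower bound $v\ge 1/M$ on $\partial B_1$ survives as a boundary barrier for $w_j$ on $\partial\Omega_j$, which lets one push the sphere radius to any $\lambda_0$ and conclude the limit must be constant --- contradicting the bubble. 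A purely local Pohozaev calculation cannot see the boundary data, so your plan needs to be replaced or substantially augmented here.

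\textbf{Asymptotic symmetry: the error terms are not the obstacle.} You flag the ``main obstacle'' as tracking the non-conformal error terms produced by $c|x|^{-2}$ and $|x|^{-\sigma}$ under Kelvin inversion about off-origin centers. In fact, under the structural hypotheses $c\ge 0$, $\sigma\in[0,2)$, $p\le\frac{n+2-2\sigma}{n-2}$ these terms have a favorable sign rather than a size that needs to be controlled: writing $K(x)=x_0+\lambda^2(x-x_0)/|x-x_0|^2$, an elementary computation (using $0<\lambda\le|x_0|$ and $|x-x_0|\ge\lambda$) shows
\begin{equation*}
\left(\frac{\lambda}{|x-x_0|}\right)^{4}\frac{c}{|K(x)|^{2}}\le\frac{c}{|x|^{2}},\qquad
\left(\frac{\lambda}{|x-x_0|}\right)^{n+2-p(n-2)}\frac{1}{|K(x)|^{\sigma}}\le\frac{1}{|x|^{\sigma}},
\end{equation*}
so that $-\Delta(v-v_{x_0,\lambda})\ge 0$ wherever $v\ge v_{x_0,\lambda}$, with no remainder to estimate. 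The rate $O(|x|)$ then comes out of the standard max/min comparison on $\partial B_r$ as in the paper. In short: the part you identify as hardest is routine given the sign conditions, while the part you dismiss as routine (excluding the bubble in the critical case) is exactly where the paper deploys the extra moving-spheres machinery. A minor further remark: the ``bounded $d_k$'' sub-case of your blow-up dichotomy does not arise if the doubling lemma is applied with weight $\mathtt{dist}(\cdot,\partial(B_1\setminus\{0\}))$, since the selected point automatically satisfies $|y_k|\cdot v(y_k)^{(p-1)/(2-\sigma)}\to\infty$, i.e. $d_k\to\infty$.
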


In   subsection \ref{Sec-2.1}, we prove \eqref{equ-Thm-bound-subcri} for $1<p<\frac{n+2}{n-2}$ case. In   subsection \ref{Sec-2.2}, we prove \eqref{equ-Thm-bound-subcri} for $p=\frac{n+2}{n-2}$ case. In subsection \ref{Sec-2.3} we prove that \eqref{Equ-spherical-average} follows from \eqref{equ-Thm-bound-subcri}.

\subsection{Upper bound when $p<\frac{n+2}{n-2}$}\label{Sec-2.1}

The proof relies on the blow-up analysis as Theorem 1.2 in \cite{23} and Theorem 2.1 in \cite{xiong-2020}.
\begin{lemma}\label{Lem-Blowup-subcritical}
Let $1<p<\frac{n+2}{n-2}$ and $\gamma_{i} \in(0,1],~ i=1,2 .$ Let $l \in C^{\gamma_{1}}\left(\overline{B}_{1}\right)$ and
$h \in C^{\gamma_{2}}\left(\overline{B}_{1}\right)$ satisfy
\begin{equation}\label{Equ-bound-temp-1}
\|h\|_{C^{\gamma_{2}}\left(\overline{B}_{1}\right)} \leq C_{1}, \quad\|l\|_{C^{\gamma_{1}}\left(\overline{B}_{1}\right)} \leq C_{1}^{\prime},
\end{equation}
and
\begin{equation}\label{Equ-bound-temp-2}
h(x)\geq C_2,\quad x\in\overline B_1
\end{equation}
for some constants $C_1,C_1',C_2>0$. Then there exists $C=C(\gamma_1,\gamma_2,C_1,C_1',C_2,p,n)>0$ such that  any nonnegative classical solution $v(x)$ of
\begin{equation}\label{Equ-equ-temp}
-\Delta v=l(x) v+h(x) v^{p}, \quad x \in B_{1},
\end{equation}
satisfies
\begin{equation*}|v(x)|^{\frac{p-1}{2}}+|\nabla v(x)|^{\frac{p-1}{p+1}} \leq C\left(1+\mathtt{dist}^{-1}\left(x, \partial B_{1}\right)\right), \quad x \in B_{1},
\end{equation*}
where $\mathtt{dist}(x,\partial B_1)$  is the distance between $x$ and $\partial B_1$.
\end{lemma}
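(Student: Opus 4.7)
The plan is to prove the bound by contradiction using the Polá\v{c}ik--Quittner--Souplet doubling technique combined with a blow-up and the Gidas--Spruck Liouville theorem; this is the standard scheme for universal bounds in the subcritical range.

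Set $M(x) := v(x)^{(p-1)/2} + |\nabla v(x)|^{(p-1)/(p+1)}$ and suppose the conclusion fails. Then there exist sequences of data $l_k, h_k$ satisfying \eqref{Equ-bound-temp-1}--\eqref{Equ-bound-temp-2} uniformly, solutions $v_k \geq 0$ of \eqref{Equ-equ-temp}, and points $x_k \in B_1$ with
\[
M_k(x_k) > 2k\bigl(1 + \mathtt{dist}^{-1}(x_k, \partial B_1)\bigr).
\]
The first main step is to invoke the doubling lemma of Polá\v{c}ik--Quittner--Souplet to produce new points $y_k \in B_1$ with $M_k(y_k) \geq M_k(x_k)$, with $\lambda_k := M_k(y_k)^{-1}$ satisfying $\lambda_k^{-1}\mathtt{dist}(y_k, \partial B_1) \to \infty$, and with
\[
M_k(z) \leq 2 M_k(y_k) \quad\text{for all } z \text{ with } |z - y_k| \leq k\lambda_k.
\]

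The second step is to rescale. Let
\[
w_k(y) := \lambda_k^{2/(p-1)} v_k(y_k + \lambda_k y), \qquad y \in \Omega_k := \{y : |y| \leq k\},
\]
so that $w_k(0)^{(p-1)/2} + |\nabla w_k(0)|^{(p-1)/(p+1)} = 1$ and $w_k(y)^{(p-1)/2} + |\nabla w_k(y)|^{(p-1)/(p+1)} \leq 2$ on $\Omega_k$. A direct computation shows $w_k$ solves
\[
-\Delta w_k(y) = \lambda_k^2\, l_k(y_k + \lambda_k y)\, w_k(y) + h_k(y_k + \lambda_k y)\, w_k^{p}(y).
\]
Here the coefficient in front of the linear term vanishes as $\lambda_k \to 0$, while $\|h_k\|_{C^{\gamma_2}}$ is controlled and $h_k \geq C_2 > 0$, so along a subsequence the coefficient of the nonlinearity converges uniformly on compact sets to a positive constant $h_\infty \geq C_2$ (by Arzelà--Ascoli applied to $h_k(y_k + \lambda_k \cdot)$). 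Uniform $L^\infty$ bounds on $w_k$ plus standard interior elliptic $C^{1,\alpha}$ estimates yield $w_k \to w_\infty$ in $C^1_{\mathrm{loc}}(\mathbb R^n)$, where $w_\infty \geq 0$ solves $-\Delta w_\infty = h_\infty w_\infty^p$ on $\mathbb R^n$ and satisfies
\[
w_\infty(0)^{(p-1)/2} + |\nabla w_\infty(0)|^{(p-1)/(p+1)} = 1,
\]
so $w_\infty \not\equiv 0$. Rescaling by $h_\infty^{1/(p-1)}$ reduces to the equation $-\Delta \tilde w_\infty = \tilde w_\infty^{p}$ on $\mathbb R^n$, for which the Gidas--Spruck Liouville theorem (applicable since $1 < p < (n+2)/(n-2)$) forces $\tilde w_\infty \equiv 0$, contradicting the normalization at the origin.

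The technical heart is the doubling lemma, which ensures that after rescaling at $y_k$ the normalized solutions $w_k$ satisfy a uniform $L^\infty$ bound on expanding balls — without doubling one would get normalization at the origin but no control nearby, and hence no compactness. The remainder is routine: absorbing the boundedness of $l_k$ into the rescaling (the factor $\lambda_k^2 \to 0$ is essential, and uses only $\|l_k\|_{L^\infty} \leq C_1'$), checking subsequential convergence of $h_k(y_k + \lambda_k\cdot)$ using \eqref{Equ-bound-temp-1}, and invoking the Liouville theorem.
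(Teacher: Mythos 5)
Your proposal is correct and follows essentially the same route as the paper: a contradiction argument, the Pol\'{a}\v{c}ik--Quittner--Souplet doubling lemma (this is exactly what the paper cites as Lemma 5.1 of its reference [24]), rescaling by $\lambda_k = M_k^{-1}$, passing to the limit in the coefficients, and contradicting the Gidas--Spruck Liouville theorem. The only place where your exposition is slightly more compressed than the paper's is the claim that $h_k(y_k+\lambda_k\cdot)$ converges to a \emph{constant}: Arzel\`a--Ascoli alone only yields a continuous subsequential limit, and the constancy requires the additional observation that $[h_k(y_k+\lambda_k\cdot)]_{C^{\gamma_2}} \leq \lambda_k^{\gamma_2} C_1 \to 0$, which the paper spells out and you leave implicit in the phrase ``$\|h_k\|_{C^{\gamma_2}}$ is controlled.''
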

\begin{proof}
  By contradiction, we assume that there exist sequences $\left\{l_{k}\right\}_{k=1}^{\infty},\left\{h_{k}\right\}_{k=1}^{\infty}$ and $\left\{v_{k}\right\}_{k=1}^{\infty}$ satisfying \eqref{Equ-bound-temp-1}, \eqref{Equ-bound-temp-2} and \eqref{Equ-equ-temp} and $\{y_k\}_{k=1}^{\infty}\subset B_1$ such that
  \begin{equation*}M_{k}(x):=\left|v_{k}(x)\right|^{\frac{p-1}{2}}+\left|\nabla v_{k}(x)\right|^{\frac{p-1}{p+1}},\quad x \in B_{1},
  \end{equation*}
  satisfy
  \begin{equation}\label{equ-temp-1}
  M_{k}\left(y_{k}\right)>2 k\left(1+\mathtt{dist}^{-1}\left(y_{k}, \partial B_{1}\right)\right) \geq 2 k \mathtt{ dist }^{-1}\left(y_{k}, \partial B_{1}\right).\end{equation}
  By Lemma 5.1 in \cite{24}, there exists $x_k\in B_1$ such that
  \begin{equation*}M_{k}\left(x_{k}\right) \geq M_{k}\left(y_{k}\right), \quad M_{k}\left(x_{k}\right)>2 k \mathtt{dist}^{-1}\left(x_{k}, \partial B_{1}\right),
  \end{equation*}
  and
  \begin{equation}\label{equ-temp-2}
  M_{k}(z) \leq 2 M_{k}\left(x_{k}\right)\quad \text{for all}~\left|z-x_{k}\right| \leq k M_{k}^{-1}\left(x_{k}\right).
  \end{equation}
  Let $\lambda_k:=M_k^{-1}(x_k)~\text{and}\quad$
  $$
w_{k}(y):=\lambda_{k}^{\frac{2}{p-1}} v_{k}\left(x_{k}+\lambda_{k} y\right).
  $$
  By a direct computation,
  \begin{equation}\label{equ-temp-3}
  -\Delta w_{k}=\lambda_{k}^{2} \tilde{l}_{k}(y) w_{k}+\tilde{h}_{k}(y) w_{k}^{p}, \quad|y| \leq k,\end{equation}
where $\tilde{l}_{k}(y)=l_{k}\left(x_{k}+\lambda_{k} y\right)$ and $\tilde{h}_{k}(y)=h_{k}\left(x_{k}+\lambda_{k} y\right)$.  Furthermore, by \eqref{equ-temp-1} and \eqref{equ-temp-2}, $\lambda_k\leq \frac{1}{2k}$, $|w_k(0)|^{\frac{p-1}{2}}+|\nabla w_k(0)|^{\frac{p-1}{p+1}}=1$ and
\begin{equation}\label{equ-temp-4}\left[\left|w_{k}\right|^{\frac{p-1}{2}}+\left|\nabla w_{k}\right|^{\frac{p-1}{p+1}}\right](y)=\frac{M_{k}\left(x_{k}+\lambda_{k} y\right)}{M_{k}\left(x_{k}\right)} \leq 2,\quad |y| \leq k.\end{equation}
By  \eqref{Equ-bound-temp-1} and \eqref{Equ-bound-temp-2}, we have $$[\tilde h_k]_{C^{\gamma_2}(B_k)}\leq \lambda_k^{\gamma_2}C_1,~~||\lambda_k^2\tilde l_k||_{C^{\gamma_1}(B_k)}\leq \lambda_k^2C_1'\quad\text{and}\quad
C_1\geq \tilde h_k\geq C_2.$$
By Arzela-Ascoli theorem, there exist a subsequence (still denoted as $\tilde h_k$) and $\tilde h\in C(\mathbb R^n)$ such that
$
\tilde h_k\rightarrow \tilde h~\text{in}~C_{loc}(\mathbb R^n).
$
Furthermore, for any $x,y\in\mathbb R^n$,
$$
|\tilde h(x)-\tilde h(y)|
=\lim_{k\rightarrow+\infty}|\tilde h_k(x)-\tilde h_k(y)|
\leq \limsup_{k\rightarrow+\infty}\lambda_k^{\gamma_2}C_1|x-y|^{\gamma_2}=0.
$$
Thus
$\tilde h\equiv H$ for some constant $H\in [C_2,C_1]$.

For any $R>0, n<q<\infty$, by \eqref{equ-temp-3}, \eqref{equ-temp-4} and interior $L^q$-estimates, $w_k$ is uniformly bounded in $W^{2,q}(B_R)$. By Morrey's inequality, compact embedding and extracting subsequences, there exists $w\in C^2(\mathbb R^n)$ such that $$w_{k} \rightarrow w\quad\text {in } C_{l o c}^{2}\left(\mathbb{R}^{n}\right)$$
and
\begin{equation}\label{equ-temp-5}
-\Delta w=H w^{p}\quad\text {in } \mathbb{R}^{n}.
\end{equation}
Since $w_k$ are positive functions satisfying $\left|w_{k}(0)\right|^{\frac{p-1}{2}}+\left|\nabla w_{k}(0)\right|^{\frac{p-1}{p+1}}=1$, we have $w\geq 0$ and $|w|^{\frac{p-1}{2}}(0)+|\nabla w|^{\frac{p-1}{p+1}}(0)=1$. Thus $w$ must be a positive $C^2$ solution of \eqref{equ-temp-5}, contradicting to the Liouville-type theorem as Theorem 1.1 in \cite{GS}.
\end{proof}

Now we prove the upper bound in Theorem \ref{Thm-1-Punctured} for $p<\frac{n+2}{n-2}$ case.
\begin{proposition}\label{Prop-2.3}
  Let $v$ be a positive classical solution of \eqref{Equ-Punctured} in $B_1\setminus\{0\}$ with \eqref{Equ-cond-pucntured} and $p<\frac{n+2}{n-2}$. Then \eqref{equ-Thm-bound-subcri} holds.
\end{proposition}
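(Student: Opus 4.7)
The scaling that matches the equation is dictated by the Emden--Fowler exponent $\tfrac{2-\sigma}{p-1}$. For each point $x_0\in B_{1/2}\setminus\{0\}$, set $r=|x_0|$ and $\xi_0=x_0/r\in\mathbb{S}^{n-1}$, and define the rescaled function
\begin{equation*}
w(y):=r^{\frac{2-\sigma}{p-1}}\,v(x_0+ry),\qquad |y|<\tfrac{1}{2r}.
\end{equation*}
A direct computation using \eqref{Equ-Punctured} shows that $w$ satisfies
\begin{equation*}
-\Delta w(y)=\frac{c}{|\xi_0+y|^{2}}\,w(y)+\frac{w(y)^{p}}{|\xi_0+y|^{\sigma}}
\end{equation*}
wherever $x_0+ry\in B_1\setminus\{0\}$: the exponent $\tfrac{2-\sigma}{p-1}$ is chosen precisely so that the powers of $r$ cancel in both the Hardy potential and the nonlinearity. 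Restricted to $|y|\le \tfrac{1}{2}$ one has $|\xi_0+y|\in[\tfrac{1}{2},\tfrac{3}{2}]$, so the coefficients $l(y):=c/|\xi_0+y|^{2}$ and $h(y):=1/|\xi_0+y|^{\sigma}$ are smooth, with $C^{1}$ (hence $C^{\gamma}$) norms bounded by universal constants independent of $\xi_0$ and $r$, and $h(y)\ge (3/2)^{-\sigma}>0$.

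Next, apply Lemma \ref{Lem-Blowup-subcritical} to $w$ on $B_{1/2}$. The lemma is stated on the unit ball, but it is invariant under the rescaling $\widehat{w}(z):=2^{-\frac{2}{p-1}}w(z/2)$, which yields an equation on $B_1$ with coefficients enjoying the same uniform bounds (the Hölder seminorms pick up a factor $2^{-\gamma_i}\le 1$ and the pointwise lower bound on $h$ is unchanged). Hence Lemma \ref{Lem-Blowup-subcritical} gives
\begin{equation*}
|w(0)|^{\frac{p-1}{2}}+|\nabla w(0)|^{\frac{p-1}{p+1}}\le C
\end{equation*}
for some $C$ depending only on $n,p,c,\sigma$, and in particular independent of $x_0$.

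Finally, undoing the scaling,
\begin{equation*}
v(x_0)=r^{-\frac{2-\sigma}{p-1}}w(0),\qquad \nabla v(x_0)=r^{-\frac{p+1-\sigma}{p-1}}\nabla w(0),
\end{equation*}
so the bound on $w(0)$ and $\nabla w(0)$ translates directly into
\begin{equation*}
|x_0|^{\frac{2-\sigma}{p-1}}v(x_0)+|x_0|^{\frac{p+1-\sigma}{p-1}}|\nabla v(x_0)|\le C'
\end{equation*}
for all $x_0\in B_{1/2}\setminus\{0\}$, which is \eqref{equ-Thm-bound-subcri}. The only real content is the choice of scaling exponent and the observation that, because $\xi_0$ lives on the unit sphere, the rescaled coefficients never blow up on $B_{1/2}$ regardless of how close $x_0$ is to the singular point; there is no serious technical obstacle, and the argument is essentially that of Theorem 1.2 in \cite{23} and Theorem 2.1 in \cite{xiong-2020} adapted to the present equation.
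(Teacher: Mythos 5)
Your argument is correct and is essentially the paper's own proof: the paper simply chooses the scaling radius $R=\tfrac12|x_0|$ so that the rescaled function $V(y)=R^{\frac{2-\sigma}{p-1}}v(x_0+Ry)$ already lives on $B_1$ and Lemma~\ref{Lem-Blowup-subcritical} applies verbatim, whereas you scale by $r=|x_0|$ and then perform the trivial extra dilation to pass from $B_{1/2}$ to $B_1$. The content — choosing the exponent $\tfrac{2-\sigma}{p-1}$ so the Hardy and H\'enon weights become bounded coefficients away from the origin, verifying the uniform bounds \eqref{Equ-bound-temp-1}--\eqref{Equ-bound-temp-2}, invoking the blow-up lemma, and undoing the scaling — is identical.
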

\begin{proof}
  It suffices to prove that there exists a positive constant $C$ such that for any  $0<\left|x_{0}\right|<\frac{1}{2},$ $$\left|x_{0}\right|^{\frac{2-\sigma}{p-1}} v\left(x_{0}\right) \leq C\quad\text{and}\quad\left|x_{0}\right|^{\frac{p+1-\sigma}{p-1}}\left|\nabla v\left(x_{0}\right)\right| \leq C.$$
  Let $R:=\frac{1}{2}|x_0|$ and
  \begin{equation*}V(y):=R^{\frac{2-\sigma}{p-1}} v\left(x_{0}+R y\right),\quad y\in B_1.
  \end{equation*}
  Then $V$ is a positive classical solution of
  \begin{equation*}
  -\Delta V=l(y) V+h(y) V^{p}\quad\text {in } B_{1},
  \end{equation*}
  where $l(y)=\frac{c}{\left|\frac{x_{0}}{R}+y\right|^{2}}, h(y)=\frac{1}{\left|\frac{x_{0}}{R}+y\right|^{\sigma}}$. Since
  $1 \leq\left|\frac{x_{0}}{R}+y\right| \leq 3$ for all $y \in \overline{B}_{1}$, $h$ and $l$ satisfy conditions \eqref{Equ-bound-temp-1} and \eqref{Equ-bound-temp-2}. By Lemma \ref{Lem-Blowup-subcritical},
  \begin{equation*}|V(0)|^{\frac{p-1}{2}}+|\nabla V(0)|^{\frac{p-1}{p+1}} \leq C,
  \end{equation*}
  for some constant $C$ independent of $x_0$. Then \eqref{equ-Thm-bound-subcri} follows immediately.
\end{proof}
\begin{remark}
  The proof for $p=\frac{n+2}{n-2}$ case is very different from the proof for $1<p<\frac{n+2}{n-2}$. This is due to $-\Delta w=w^{\frac{n+2}{n-2}}$ in $\mathbb R^n\setminus\{0\}$ has non-trivial positive solutions \cite{CGS}.
\end{remark}

\subsection{Upper bound when $p=\frac{n+2}{n-2}$}\label{Sec-2.2}

 The proof relies on blow-up analysis and the method of moving spheres, similar to the proof in \cite{19,Li-FracHardy,xiong-2020}. For any $x_0\in\mathbb R^n$ and $\lambda>0$, we let
 \begin{equation*}w_{x_{0}, \lambda}(x):=\left(\frac{\lambda}{\left|x-x_{0}\right|}\right)^{n-2} w\left(x_{0}+\frac{\lambda^{2}\left(x-x_{0}\right)}{\left|x-x_{0}\right|^{2}}\right)
 \end{equation*}
 denote the Kelvin transform of $w$ with respect to $\partial B_{\lambda}(x_0)$. By \eqref{Equ-cond-pucntured}, if $\sigma>0$ then $p\leq \frac{n+2-2\sigma}{n-2}<\frac{n+2}{n-2}$. Thus we have $\sigma=0$ in this case.

\begin{proposition}\label{Prop-2.5}
  Let $v$ be a positive classical solution of \eqref{Equ-Punctured} in $B_1\setminus\{0\}$ with \eqref{Equ-cond-pucntured}, $\sigma=0$ and $p=\frac{n+2}{n-2}$. Then \eqref{equ-Thm-bound-subcri} holds.
\end{proposition}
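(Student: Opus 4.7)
My plan is to argue by contradiction via blow-up analysis combined with the method of moving spheres, adapting the strategy of \cite{19, Li-FracHardy, xiong-2020}. A new ingredient is needed compared to Proposition \ref{Prop-2.3} because at the critical exponent $p=(n+2)/(n-2)$ the Liouville theorem invoked in Lemma \ref{Lem-Blowup-subcritical} fails: the limiting equation $-\Delta w=w^{(n+2)/(n-2)}$ on $\mathbb R^n$ admits the nontrivial bubble family classified in \cite{CGS}.

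\emph{Step 1: Blow-up.} Suppose, for contradiction, that $\limsup_{x\to 0}|x|^{(n-2)/2}v(x)=\infty$. Via a doubling-selection argument (Lemma 5.1 of \cite{24}) entirely parallel to the one in Lemma \ref{Lem-Blowup-subcritical}, I extract sequences $x_k\to 0$ and $\lambda_k:=v(x_k)^{-2/(n-2)}\to 0$ with $\lambda_k/|x_k|\to 0$, such that the rescaling $w_k(y):=\lambda_k^{(n-2)/2}v(x_k+\lambda_k y)$ is locally uniformly bounded on expanding balls and satisfies
\begin{equation*}
-\Delta w_k = \frac{c\lambda_k^2}{|x_k+\lambda_k y|^2}\,w_k + w_k^{(n+2)/(n-2)}.
\end{equation*}
Because $\lambda_k/|x_k|\to 0$ the linear coefficient vanishes on compacts. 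Interior $L^q$-estimates plus Morrey's embedding yield, after extracting a subsequence, $w_k\to w$ in $C^2_{\mathrm{loc}}(\mathbb R^n)$ where $w>0$ solves $-\Delta w=w^{(n+2)/(n-2)}$ with $w(0)=1$; by \cite{CGS}, $w$ is a standard bubble of the form $(\mu/(1+\mu^2|y-y_0|^2))^{(n-2)/2}$.

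\emph{Step 2: Moving spheres and contradiction.} The contradiction is extracted by applying the method of moving spheres to the sequence $w_k$, centered at base points $P_k$ chosen in the vicinity of the origin of each $w_k$. For each such $P_k$ and $\lambda>0$, the Kelvin image $(w_k)_{P_k,\lambda}$ (as defined in the text preceding Proposition \ref{Prop-2.5}) satisfies a perturbation of $-\Delta u=u^{(n+2)/(n-2)}$ whose lower-order coefficient differs from the vanishing Hardy coefficient of $w_k$ only by a term that is $o(1)$ on compact subsets. Initializing with $\lambda$ small (where comparison with $w_k$ on a large annular exterior follows from standard decay of the Kelvin image for bounded $w_k$), one increases $\lambda$ to a critical value $\bar\lambda(P_k)$; in the limit $k\to\infty$ these critical radii converge to the critical moving-sphere radius for the bubble $w$ at $P$, which is forced (by rigidity of the bubble under moving spheres, exactly as in \cite{19,xiong-2020}) to equal the distance from $P$ to the bubble center $y_0$. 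Translating this rigidity back to $v$ produces a spherical Harnack inequality $\max_{|x|=|x_k|}v\leq C\min_{|x|=|x_k|}v$; combined with the blow-up profile from Step 1, which makes $v(x_k)=\lambda_k^{-(n-2)/2}\to\infty$ while $v$ must be much smaller at antipodal points on the same sphere, this gives the desired contradiction. The upper bound on $v$ is thus established, and the gradient bound follows by a routine rescaling and interior derivative estimate, precisely as in the concluding step of the proof of Proposition \ref{Prop-2.3}.

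The principal obstacle will be Step 2: making the moving spheres argument robust in $k$ in the presence of the non-conformally-invariant Hardy potential $c/|x|^2$ under Kelvin transforms centered at $P_k\neq 0$. The hypothesis $c\geq 0$ from \eqref{Equ-cond-pucntured} is essential here because it fixes the sign of the lower-order perturbation arising in the equation for $(w_k)_{P_k,\lambda}-w_k$, which is what allows the maximum principle comparison to be propagated uniformly in $k$. A secondary technical point, already handled in the cited references, is verifying that the moving sphere method can be initialized for some $\lambda>0$ in spite of the a priori lack of an upper bound on $v$ near $0$, by starting the comparison on a fixed intermediate scale rather than directly from $\partial B_1$.
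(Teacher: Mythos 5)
Your Step 1 (blow-up normalization and convergence of $w_k$ to a CGS bubble) matches the paper's proof in outline, though the paper uses the Schoen-type selection functional $\omega_j(x)=(|x_j|/2-|x-x_j|)^{(n-2)/2}v(x)$ rather than the doubling lemma of \cite{24}; both produce the same blow-up scenario and that difference is immaterial.

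Step 2 has a genuine gap, and moreover it runs in the opposite direction from the paper's argument. You posit that the moving-sphere radii $\bar\lambda(P_k)$ stop at a finite critical value which converges to the bubble's invariance radius $|P-y_0|$, and then try to extract a spherical Harnack inequality from this. That leap is not justified by what you have written, and it is in any case not how the contradiction is reached in the paper. The paper's key technical observation is that the moving spheres for $w_j$ \emph{do not stop} at any prescribed radius $\lambda_0$. The mechanism is the uniform lower bound $v\ge 1/M$ on $\partial B_1$: after the blow-up rescaling this becomes $w_j\gtrsim 1/(Mv(\bar x_j))\approx |y|^{-(n-2)/2}$ on the outer boundary $\partial\Omega_j$, which dominates the $|y-x_0|^{-(n-2)}$ decay of both the harmonic comparison function $\phi_j$ and the Kelvin image $(w_j)_{x_0,\bar\lambda}$ for $j$ large. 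This outer-boundary control (together with $c\ge 0$, which gives the correct sign for the maximum principle) is what lets the moving sphere be propagated past every $\lambda_0$. Sending $j\to\infty$, the limit $w$ satisfies $w_{x,\lambda}\le w$ for \emph{all} $x\in\mathbb R^n$ and all $\lambda>0$, which by the rigidity lemma (Lemma 2.1 of \cite{15}, or Lemma 11.2 of \cite{Li-Zhang-Harnack}) forces $w$ to be constant — directly contradicting the fact that $w$ is a normalized bubble with $w(0)=1$, $\nabla w(0)=0$. There is no need to produce a spherical Harnack inequality for $v$ (indeed that is the content of Proposition \ref{prop-2.7}, which is proven \emph{after} and using the upper bound \eqref{equ-Thm-bound-subcri}, so invoking it here risks circularity). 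If you want to make your Step 2 work, you need to replace the ``stopping-radius'' discussion with the outer-boundary estimate on $\partial\Omega_j$ and the unboundedness of the moving-sphere radius; as written, the argument does not close.
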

\begin{proof}

  Without loss of generality, we assume that $v$ is continuous to the boundary $\partial B_{1}$ and $v>\frac{1}{M}$ on $\partial B_{1}$ for some $M>0 .$ Otherwise we can consider the equation in $B_{3 / 4} \backslash\{0\}$.  By contradiction, we assume that
  there exists a sequence $\left\{x_{j}\right\}_{j=1}^{\infty} \subset B_{1}$ such that $x_j\rightarrow 0$ and
  \begin{equation}\label{equ-temp-8}
  \left|x_{j}\right|^{\frac{n-2}{2}} v\left(x_{j}\right) \rightarrow \infty\quad \text {as } j \rightarrow \infty.
  \end{equation}
  Let
  \begin{equation*}\omega_{j}(x):=\left(\frac{\left|x_{j}\right|}{2}-\left|x-x_{j}\right|\right)^{\frac{n-2}{2}} v(x), \quad\left|x-x_{j}\right| \leq \frac{\left|x_{j}\right|}{2}.
  \end{equation*}
  Then $\omega_j(x)$ admits at least one interior maximum point, denoted as $\overline x_j$ such that
  \begin{equation*}\omega_{j}\left(\overline{x}_{j}\right)=\max _{\left|x-x_{j}\right| \leq \frac{|x_j|}{2}} \omega_{j}(x)>0.
  \end{equation*}
  Let $2 \mu_{j}:=\frac{\left|x_{j}\right|}{2}-\left|\overline{x}_{j}-x_{j}\right|$, then by triangle inequality,
  \begin{equation*}0<2 \mu_{j} \leq \frac{\left|x_{j}\right|}{2}\quad \text {and}\quad \frac{\left|x_{j}\right|}{2}-\left|x-x_{j}\right| \geq \mu_{j},\quad \forall~\left|x-\overline{x}_{j}\right| \leq \mu_{j}.\end{equation*}
  Together with the definition of $\omega_j$ and $\mu_j$,
  \begin{equation*}\left(2 \mu_{j}\right)^{\frac{n-2}{2}} v\left(\overline{x}_{j}\right)=\omega_{j}\left(\overline{x}_{j}\right) \geq \omega_{j}(x) \geq\left(\mu_{j}\right)^{\frac{n-2}{2}} v(x),\quad~ \forall~\left|x-\overline{x}_{j}\right| \leq \mu_{j},
  \end{equation*}
  i.e.,
  \begin{equation}\label{equ-temp-6}
  2^{\frac{n-2}{2}} v\left(\overline{x}_{j}\right) \geq v(x),\quad\forall~\left|x-\overline{x}_{j}\right| \leq \mu_{j}.
  \end{equation}
  From \eqref{equ-temp-8}, we also have
  \begin{equation}\label{equ-temp-7}
  \left(2 \mu_{j}\right)^{\frac{n-2}{2}} v\left(\overline{x}_{j}\right)=\omega_{j}\left(\overline{x}_{j}\right) \geq \omega_{j}\left(x_{j}\right)=\left(\frac{\left|x_{j}\right|}{2}\right)^{\frac{n-2}{2}} v\left(x_{j}\right) \rightarrow \infty \quad \text {as } j \rightarrow \infty.
  \end{equation}
  Now, consider
  \begin{equation*}w_{j}(y):=\frac{1}{v\left(\overline{x}_{j}\right)} v\left(\overline{x}_{j}+\frac{y}{v\left(\overline{x}_{j}\right)^{\frac{2}{n-2}}}\right), \quad y \in \Omega_{j}:=\left\{y \in \mathbb{R}^{n} \mid \overline{x}_{j}+\frac{y}{v\left(\overline{x}_{j}\right)^{\frac{2}{n-2}}} \in B_{1} \backslash\{0\}\right\}.
  \end{equation*}
  Then $w_j(0)=1$ and
  \begin{equation*}-\Delta w_{j}=w_{j}^{\frac{n+2}{n-2}}+c_{j}(y) w_{j} \quad \text {in } \Omega_{j},
  \end{equation*}
  where
  \begin{equation*}c_{j}(y)=\frac{c}{\left|y+\overline{x}_{j} v\left(\overline{x}_{j}\right)^{\frac{2}{n-2}}\right|^{2}}.\end{equation*}
  It follows from \eqref{equ-temp-6} and \eqref{equ-temp-7} that
  \begin{equation*}w_{j}(y) \leq 2^{\frac{n-2}{2}}\quad\text {in } B_{R_{j}},
  \end{equation*}
  where $R_{j}:=\mu_{j} v\left(\overline{x}_{j}\right)^{\frac{2}{n-2}}\rightarrow+\infty$ as $j\rightarrow+\infty.$ Furthermore, for all $y\in B_{R_j}$,
  \begin{equation*}0<\frac{c}{\left|\frac{7}{4}| x_{j}|v\left(\overline{x}_{j}\right)^{\frac{2}{n-2}}\right|^{2}} \leq c_{j}(y) \leq \frac{c}{\left|\mu_{j} v\left(\overline{x}_{j}\right)^{\frac{2}{n-2}}\right|^{2}}\rightarrow 0\quad\text{as}~j\rightarrow\infty,
  \end{equation*}
  and $[c_j]_{C^{0,1}(B_{R_j})}$ is uniformly (to $j$) bounded. By interior $L^p$-estimates and   embedding theories again, there exists a subsequence (still denoted as $w_j$) such that
  $w_{j} \rightarrow w  $ in $C_{l o c}^{2}\left(\mathbb{R}^{n}\right)$
  for some $w\geq 0$ satisfying
  \begin{equation}\label{Equ-extremal}
  -\Delta w=
  w^{\frac{n+2}{n-2}}\quad \text {in } \mathbb{R}^{n}.
  \end{equation}
  Since $w(0)=1$ and $\nabla w(0)=0$, by the Liouville type theorem of Caffarelli-Gidas-Spruck \cite{CGS}, $w=\left(\frac{n(n-2)}{n(n-2)+|y|^2}\right)^{\frac{n-2}{2}}$.

  On the other hand, we   prove that $w$ has to be a constant. To that end, for any $x_0\in\mathbb R^n,\lambda_0>0$, let $j$ sufficiently large such that $\left|x_{0}\right|<\frac{R_{j}}{10}, \lambda_{0}<\frac{R_{j}}{10}$.

  \textit{Claim 1.} There exists $\lambda_1>0$ such that
  \begin{equation*}\left(w_{j}\right)_{x_{0}, \lambda}(y) \leq w_{j}(y), \quad \forall~ 0<\lambda<\lambda_{1},~ y \in \Omega_{j} \backslash B_{\lambda}\left(x_{0}\right).
  \end{equation*}
  By a direct computation, since $w_j>0$ is smooth, there exists sufficiently small $r_{x_0}$ such that
  \begin{equation}\label{equ-stating}
\frac{\mathrm{d}}{\mathrm{d} r}\left(r^{\frac{n-2}{2}} w_{j}\left(x_{0}+r \theta\right)\right)>0
\end{equation}
for all $0<r\leq r_{x_0},~\theta\in\mathbb S^{n-1}$.  Thus for any $y \in B_{r_{x_{0}}}\left(x_{0}\right), 0<\lambda<\left|y-x_{0}\right| \leq r_{x_{0}},$ let $\theta=\frac{y-x_{0}}{|y-x_{0}|}, r_{1}=\left|y-x_{0}\right|, r_{2}=\frac{\lambda^{2}}{|y-x_0|^{2}} r_{1},$ we have $r_{2} <r_{1}$ and
$$
r_2^{\frac{n-2}{2}}w_j(x_0+r_2\theta)<r_1^{\frac{n-2}{2}} w_j(x_0+r_1\theta).
$$
Namely,
\begin{equation*}\left(w_{j}\right)_{x_{0}, \lambda}(y) \leq w_{j}(y), \quad\forall~ 0<\lambda<\left|y-x_{0}\right| \leq r_{x_{0}}.\end{equation*}
Let
\begin{equation*}\phi_{j}(y):
=\left(\frac{r_{x_{0}}}{\left|y-x_{0}\right|}\right)^{n-2} \inf _{\partial B_{r_{x_{0}}}\left(x_{0}\right)} w_{j},
\end{equation*}
which is harmonic away from $x_0$. By $c\geq 0$,
\begin{equation}\label{equ-temp-equ}
\Delta\left(w_{j}-\phi_{j}\right) \leq 0\quad\text {in } \Omega_{j} \backslash B_{r_{x_{0}}}\left(x_{0}\right).
\end{equation}
By the definition of $\phi_j$,
\begin{equation}\label{equ-temp-innbound}
w_{j}-\phi_{j} \geq 0\quad\text{on}~\partial B_{r_{x_{0}}}\left(x_{0}\right).
\end{equation}
Now we examine the other boundary $y\in\partial\overline{\Omega_j}$, i.e., $\left|\overline{x}_{j}+\frac{y}{v\left(\overline{x}_{j}\right)^{\frac{2}{n-2}}}\right|=1$. By the lower bound $v>\frac{1}{M}$ on $\partial B_1$, we have
\begin{equation}\label{equ-temp-bound-1}
w_{j} > \frac{1}{M v\left(\overline{x}_{j}\right)}>0\quad\text {on } \partial \overline{\Omega}_{j}.
\end{equation}
Since $\frac{\left|x_{j}\right|}{2} \leq\left|\overline{x}_{j}\right| \leq \frac{3\left|x_{j}\right|}{2} \ll 1,$ for any $y \in \partial \overline{\Omega}_{j}$, we have $|y| \approx v\left(\overline{x}_{j}\right)^{\frac{2}{n-2}}$. Therefore for sufficiently large $j$,
\begin{equation}\label{equ-temp-bound-2}
w_{j}(y)> \frac{1}{M v\left(\overline{x}_{j}\right)} \approx \frac{1}{M|y|^{\frac{n-2}{2}}}>\left(\frac{r_{x_{0}}}{\left|y-x_{0}\right|}\right)^{n-2} \inf _{\partial B_{r_{0}}\left(x_{0}\right)} w_{j}=\phi_j(y)\quad\text {on } \partial \overline{\Omega}_{j}.\end{equation}
Combining \eqref{equ-temp-equ}, \eqref{equ-temp-innbound} and \eqref{equ-temp-bound-2}, by maximum principle as Lemma 2.1 in \cite{CCLin-Duke95} we have $$w_{j} \geq \phi_{j}\quad\text{in}~\Omega_{j} \backslash B_{r_{x_0}}\left(x_{0}\right).$$Let $$\lambda_{1}:=r_{x_{0}}\left(\frac{\inf _{\partial B_{r_{0}}\left(x_{0}\right)} w_{j}}{\sup _{B_{r_{x_{0}}}\left(x_{0}\right)} w_{j}}\right)^{\frac{1}{n-2}}.$$
Then for any $0<\lambda<\lambda_{1},\left|y-x_{0}\right| \geq r_{x_{0}}$ and $y \in \Omega_{j},$ we have
\begin{equation}\label{equ-2xx}
\begin{aligned}
\left(w_{j}\right)_{x_{0}, \lambda}(y) &=\left(\frac{\lambda}{\left|y-x_{0}\right|}\right)^{n-2} w_{j}\left(x_{0}+\frac{\lambda^{2}\left(y-x_{0}\right)}{\left|y-x_{0}\right|^{2}}\right) \\
& \leq\left(\frac{\lambda_{1}}{\left|y-x_{0}\right|}\right)^{n-2} \sup _{B_{r_{x_{0}}}\left(x_{0}\right)} w_{j} \\
&=\left(\frac{r_{x_{0}}}{\left|y-x_{0}\right|}\right)^{n-2} \inf _{\partial B_{r_{x_{0}}}\left(x_{0}\right)} w_{j} \\
& < w_{j}(y).
\end{aligned}\end{equation}
This finishes the proof of \textit{Claim 1}.

We define
\begin{equation*}\overline{\lambda}(x_0):=\sup \left\{0<\mu \leq \lambda_{0}\left|\left(w_{j}\right)_{x_{0}, \lambda}(y) \leq w_{j}(y),~~ \forall~\right. |y-x_{0}| \geq \lambda,~ y \in \Omega_{j}, ~ 0<\lambda<\mu\right\},
\end{equation*}
where $\lambda_{0}$ and $x_{0}$ are fixed at the beginning. By  \textit{Claim 1}, $\overline{\lambda}(x_0)$ is well defined.

\textit{Claim 2.} $\overline\lambda(x_0)=\lambda_0$ for any $x_0\in\mathbb R^n$.

By contradiction, we assume that for some $x_0\in\mathbb R^n$, $\overline\lambda(x_0)<\lambda_0$. By the continuity of $w_j$, $(w_j)_{x_0,\overline\lambda}(y)\leq w_j(y)$ for all $y\in\Omega_j\setminus B_{\overline\lambda}(x_0)$. By a direct computation,
\begin{equation}\label{equ-maximumP}
\begin{array}{lllll}
-\Delta\left(w_{j}-\left(w_{j}\right)_{x_{0}, \overline{\lambda}}\right)&=&\displaystyle\left(w_{j}^{n+2}-\left(w_{j}\right)_{x_{0}, \overline{\lambda}}^{n+2}\right)\\
&&+\displaystyle\left(c_{j}(y) w_{j}-\left(\frac{\overline{\lambda}}{\left|y-x_{0}\right|}\right)^{4} c_{j}\left(x_{0}+\frac{\overline{\lambda}^{2}\left(y-x_{0}\right)}{\left|y-x_{0}\right|^{2}}\right)\left(w_{j}\right)_{x_{0}, \overline{\lambda}}\right)\\
&\geq& 0\\
\end{array}
\end{equation}
in $\Omega_{j} \backslash\left\{B_{\overline{\lambda}}\left(x_{0}\right) \cup\left\{P\right\}\right\}$, where $P$ is the singular point of $(w_j)_{x_0,\lambda}$.  As in the proof of \eqref{equ-temp-bound-2}, for $y\in\partial\overline{\Omega_j}$ and $j$ sufficiently large,
\begin{equation*}\begin{aligned}
\left(w_{j}\right)_{x_{0}, \overline{\lambda}}
& \leq\left(\frac{\overline{\lambda}}{\left|y-x_{0}\right|}\right)^{n-2} \sup _{B_{\overline{\lambda}}\left(x_{0}\right)} w_{j} \\
& \leq\left(\frac{\overline{\lambda}}{\left|y-x_{0}\right|}\right)^{n-2} 2^{\frac{n-2}{2}} \approx \frac{1}{v\left(\overline{x}_{j}\right)^{2}}\\
&<w_j(y).\\
\end{aligned}\end{equation*}
By maximum principle, Hopf lemma and the fact that $w_j-(w_j)_{x_0,\overline\lambda}=0$ on $\partial B_{\overline\lambda}(x_0)$,
we have $w_j-(w_j)_{x_0,\overline\lambda}>0$ in $\Omega_j\setminus\overline{B_{\overline{\lambda}}(x_0)}$ and $\frac{\partial}{\partial \mathcal V}(w_j-(w_j)_{x_0,\overline\lambda})>0$ on $\partial B_{\overline{\lambda}}(x_0)$, where $\mathcal V$ is the unit exterior normal vector of $B_{\overline{\lambda}}(x_0)$.
By a standard argument in moving spheres method as in \cite{15,xiong-2020}, we can move spheres a little further than $\overline\lambda$, contradicting to the definition of $\overline\lambda$. Therefore \textit{Claim 2} is proved.

Sending $j\rightarrow+\infty$, we obtain
\begin{equation*}w_{x, \lambda}(y) \leq w(y), \quad \forall~ x \in \mathbb{R}^{n}, \lambda>0,~|y-x| \geq \lambda.
\end{equation*}
Thus by Lemma 2.1 in \cite{15} or Lemma 11.2 in \cite{Li-Zhang-Harnack}, $w$ is a constant, contradicting to  $w=\left(\frac{n(n-2)}{n(n-2)+|y|^{2}}\right)^{\frac{n-2}{2}}$ from \eqref{Equ-extremal}.

Hence there exists $C>0$ such that  $v(x) \leq C|x|^{-\frac{n-2}{2}}$  for all $x\in B_1\setminus\{0\}$. It remains to prove the upper bound of $|\nabla v(x)|$.
For all $0<r<\frac{1}{4}$, let
\begin{equation*}v_{r}(y):=r^{\frac{n-2}{2}} v(r y),\quad
y\in B_{\frac{1}{r}}.
\end{equation*}
By a direct computation, we have $v_r(y)$ is uniformly (to $r$) bounded in $\frac{1}{2} \leq|y| \leq \frac{3}{2}$ and satisfies
\begin{equation*}\Delta v_{r}(y)=-\frac{c}{|y|^{2}} v_{r}(y)-v_{r}^{\frac{n+2}{n-2}}(y)\quad \text{in}~ \frac{1}{2} \leq|y| \leq \frac{3}{2}.
\end{equation*}
By Harnack inequality, interior $L^q$-estimates and Morrey's inequality,
$$\left|\nabla v_{r}\right| \leq C v_{r},\quad\forall~|y|=1.$$ Thus $|\nabla v(x)| \leq C \frac{v(x)}{|x|} \leq C|x|^{-\frac{n}{2}}$ and the proof is finished.
\end{proof}

\subsection{Asymptotic symmetry}\label{Sec-2.3}

\begin{proposition}\label{prop-2.7}
  Let $v$ be a positive classical solution of \eqref{Equ-Punctured} in $B_1\setminus\{0\}$ with \eqref{Equ-cond-pucntured}. Then \eqref{Equ-spherical-average} holds.
\end{proposition}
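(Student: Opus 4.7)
The plan is to work on the cylinder via the Emden-Fowler transform \eqref{Equ-transform}: setting $u(\theta,t) := e^{\frac{n+b-2}{2}t} v(e^{-t}\theta)$, the desired \eqref{Equ-spherical-average} becomes
$$u(\theta,t) = \overline u(t)\bigl(1 + O(e^{-t})\bigr) \quad\text{on } \mathbb{S}^{n-1}\times[T,\infty)$$
for $T$ large. The upper bounds \eqref{equ-Thm-bound-subcri} (established in Subsections \ref{Sec-2.1}--\ref{Sec-2.2}) together with interior elliptic regularity on the cylinder yield uniform $C^{2,\alpha}$ control of $u$ for $t\geq T$. Applying the Harnack inequality to $v$ on thin annuli $\{r\leq |x|\leq 2r\}$ then gives $u(\theta,t)\asymp\overline u(t)$ uniformly in $\theta$, so that $w(\theta,t):=u(\theta,t)/\overline u(t)-1$ is uniformly bounded on $\mathbb{S}^{n-1}\times[T,\infty)$ and has vanishing spherical mean at each $t$.

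Substituting $u=\overline u(1+w)$ into \eqref{Equ-Cylinder} and eliminating the zero-order part of $\overline u$ via the averaged equation $\overline u''+b\overline u'+a\overline u+\overline{u^p}=0$, a direct computation yields
\begin{equation*}
\partial_t^2 w + \Delta_{\mathbb{S}^{n-1}} w + \Bigl(b + \tfrac{2\overline u'}{\overline u}\Bigr)\partial_t w + \overline u^{p-1}\Bigl[(1+w)^p - (1+w)\,\overline{(1+w)^p}\Bigr] = 0.
\end{equation*}
Since $w$ is bounded and $\fint w\,d\theta=0$, Taylor expansion around $w=0$ reduces the bracket to $(p-1)w+O(w^2+\overline{w^2})$. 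Decomposing $w=\sum_{k\geq 1}\sum_j w_{k,j}(t)Y_{k,j}(\theta)$ in spherical harmonics with eigenvalues $\lambda_k=k(k+n-2)$, each Fourier coefficient satisfies
\begin{equation*}
w_{k,j}'' + \Bigl(b + \tfrac{2\overline u'}{\overline u}\Bigr) w_{k,j}' - \bigl(\lambda_k - (p-1)\overline u^{p-1}\bigr) w_{k,j} = N_{k,j}(t),
\end{equation*}
with $|N_{k,j}(t)|\leq C\overline u^{p-1}(t)\|w(\cdot,t)\|_{L^2(\mathbb{S}^{n-1})}^2$. The key gap computation: in the generic regime where $\overline u \sim e^{\mu_-^{(0)}t}$ with $\mu_-^{(0)}=-(b+\sqrt{b^2-4a})/2$, the coefficient $b+2\overline u'/\overline u$ tends to $-\sqrt{b^2-4a}$ and $\overline u^{p-1}\to 0$, so that the autonomous limit of the mode-$k$ equation has characteristic roots
$$\nu_\pm^{(k)} \;=\; \frac{\sqrt{b^2-4a} \,\pm\, \sqrt{(b^2-4a)+4\lambda_k}}{2}.$$
The hypothesis $b^2-4a\leq(n-2)^2$ together with $\lambda_1=n-1$ and the elementary identity $\sqrt{(n-2)^2+4(n-1)}=n$ yields the crucial gap $\nu_-^{(1)}\leq -1$, with $\nu_-^{(k)}\leq -k$ in the extremal case.

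A standard ODE argument concludes: the unstable exponent $\nu_+^{(k)}>0$ is ruled out by boundedness of $w_{k,j}$, and a Duhamel bound absorbs the quadratic remainder $N_{k,j}$, giving $|w_{k,j}(t)|\leq Ce^{\nu_-^{(k)}t}\leq Ce^{-t}$. Summing over $k$ via Parseval and upgrading from $L^2(\mathbb{S}^{n-1})$ to $L^\infty(\mathbb{S}^{n-1})$ through interior elliptic regularity on the cylinder yields $\|w(\cdot,t)\|_{L^\infty}\leq Ce^{-t}$, which is \eqref{Equ-spherical-average} after inverting \eqref{Equ-transform}. The main obstacle is making the ODE analysis robust when $\overline u$ lacks a clean exponential asymptotics---most delicately in the approximately periodic regime of case \ref{Case-1} of Theorem \ref{Thm-2}, where the coefficient $b+2\overline u'/\overline u$ oscillates rather than stabilizing, and in the constant-limit regime of case \ref{Case-2}, where $\overline u^{p-1}$ does not vanish. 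In such settings one should replace the characteristic root computation with either a Floquet-type analysis of the linearized ODE with periodic coefficients, or a direct energy argument on $\|\nabla_\theta w\|_{L^2(\mathbb{S}^{n-1})}^2$ exploiting the Poincar\'e inequality $\int|\nabla_\theta w|^2\geq(n-1)\int w^2$ to extract the required spectral gap uniformly in $t$.
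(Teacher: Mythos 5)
Your proposal takes a genuinely different route from the paper: instead of moving spheres you propose to linearize around the spherical average on the cylinder, decompose in spherical harmonics, and extract the decay rate from a spectral gap at the first eigenvalue $\lambda_1 = n-1$ (the identity $\sqrt{(n-2)^2+4(n-1)}=n$ is indeed the right arithmetic fact, and the bound $(p-1)(-a)\le n-2-b<n-1$ confirms the sign of the restoring term). The paper instead runs moving spheres directly on $v$ in the punctured ball: using the Bidaut-V\'eron style criterion and the upper bound \eqref{equ-Thm-bound-subcri}, it shows $v_{x,\lambda}\le v$ for all $0<\lambda<|x|$ when $|x|\le\epsilon$, and then a three-point trick ($x_1,x_2$ the max/min on $\partial B_r$, $x_3$ shifted outward) produces the Harnack-type estimate $\max_{\partial B_r}v\le(1+8r/\epsilon)^{(n-2)/2}\min_{\partial B_r}v$, which immediately gives the $O(|x|)$ rate. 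The moving-spheres argument is completely independent of any knowledge of $\overline u$.

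There is a real gap in your approach that is not merely a technical loose end: it is circular. You invoke ``the generic regime where $\overline u\sim e^{\mu_-^{(0)}t}$'' and the case distinctions (i)--(iii) of Theorem \ref{Thm-2}, but the entire classification of $\overline u$ in Section \ref{Sec-3} is derived from equation \eqref{Equ-averaged}, which in turn is a direct consequence of \eqref{equ-AsympRadial} --- i.e.\ of the very statement you are trying to prove. Without Proposition \ref{prop-2.7} the best available information on the averaged equation is $\overline u''+b\overline u'+a\overline u+\overline u^p=\overline u^p\,O(1)$, whose error term is of the same order as the nonlinearity and does not permit the ODE classification, nor the asymptotics $\overline u\sim e^{\mu_-^{(0)}t}$ you freeze the coefficients at. You also correctly flag (but do not resolve) the two non-generic regimes --- $b=0$ with $\overline u$ asymptotically periodic, where $2\overline u'/\overline u$ oscillates and a Floquet analysis would be needed; and $b>0$ with $\overline u\to(-a)^{1/(p-1)}$, where $\overline u^{p-1}$ stays bounded away from zero --- and both of those cases are again defined via Theorem \ref{Thm-2}, deepening the circularity. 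To make your scheme work you would need either to establish the $\overline u$-asymptotics by some argument independent of \eqref{equ-AsympRadial}, or to run the spectral-gap/energy estimate with only crude a priori control of the coefficients (e.g.\ $|\overline u'/\overline u|$ bounded, $\overline u^{p-1}$ bounded), which you have not done. As written, the proof does not go through.
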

The proof is based on moving spheres method again, similar to the proof of Theorem 1.3 in \cite{19}. Such method provides the convergence to averaged integral with a rate of $O(|x|)$.
\begin{proof}
  By Propositions \ref{Prop-2.3} and \ref{Prop-2.5}, we have \eqref{equ-Thm-bound-subcri}.
  As in the proof of Proposition \ref{Prop-2.5}, we may assume that $v \geq \frac{1}{M}$ on $\partial B_{1}$ for some $M>0$. We claim that there exists $0<\epsilon<\frac{1}{10}$ such that for any $0<|x| \leq \epsilon$,
  \begin{equation}\label{Claim}
  v_{x, \lambda}(y) \leq v(y),\quad\forall~ 0<\lambda<|x|,~\lambda \leq|y-x|.
  \end{equation}
  As in \eqref{equ-stating}, for any given $x$, there exists sufficiently small $0<r_x<|x|$ such that
  \begin{equation}\label{equ-temp-10}
  v_{x, \lambda}(y) \leq v(y),\quad\forall~ 0<\lambda<r_{x},~ \lambda \leq|y-x| \leq r_{x}.
  \end{equation}
  Let $0<\lambda_{1} \leq r_{x}\left(\inf _{\partial B_{r_x}(x)} v / \sup _{B_{r_{x}}(x)} v\right)^{\frac{1}{n-2}} \leq r_{x}$ and $r_x$ even smaller such that for all $y\in \partial B_1$,
  \begin{equation}\label{Equ-temp-9}
  v(y)\geq \left(\dfrac{r_x}{|y-x|}\right)^{n-2}\inf_{\partial B_{r_x}(x)}v.
  \end{equation}
  Clearly \eqref{Equ-temp-9} also holds for $y\in \partial B_{r_x}(x)$. By maximum principle,
  $$
  v(y)\geq \left(\dfrac{r_x}{|y-x|}\right)^{n-2}\inf_{\partial B_{r_x}(x)}v,\quad\forall~y\in \overline{B_1\setminus B_{r_x}(x)}.
  $$
  As in \eqref{equ-2xx}, it follows that
  \begin{equation}\label{equ-temp-11}
  v_{x,\lambda}(y)\leq v(y),\quad\forall~ 0<\lambda\leq\lambda_1,~|y-x|\geq r_x.
  \end{equation}
Combining \eqref{equ-temp-10} and \eqref{equ-temp-11}, we see that
\begin{equation*}\overline{\lambda}(x):=\sup \left\{0<\mu \leq|x|: v_{x,\lambda}(y) \leq v(y),~\forall~0<\lambda<\mu,~y \in B_{1} \backslash B_{\lambda}(x)\right\}
\end{equation*}
is well-defined.

It remains to prove that there exists $\epsilon>0$ independent of $x$ such that $\overline\lambda(x)=|x|$ for all $0<|x|\leq \epsilon$. By contradiction, we assume that $\overline\lambda(x)<|x|$ for some $x$.

By moving spheres method, we only need to focus on maintaining $v_{x,\overline \lambda}<v$ on the exterior boundary $y\in\partial B_1$.
For any $0<\lambda<|x|$ and   $y \in \partial B_{1}$,
\begin{equation*}\left|x+\frac{ \lambda^{2}(y-x)}{|y-x|^{2}}\right| \geq|x|- \lambda^{2} \frac{1}{|y-x|} \geq|x|-\frac{10}{9} \lambda^{2} \geq|x|-\frac{10}{9}|x|^{2} \geq \frac{8}{9}|x|.
\end{equation*}
Together with \eqref{equ-Thm-bound-subcri}, we have
\begin{equation*}v\left(x+\frac{ \lambda^{2}(y-x)}{|y-x|^{2}}\right) \leq C\left|x+\frac{ \lambda^{2}(y-x)}{|y-x|^{2}}\right|^{-\frac{2-\sigma}{1-p}} \leq C|x|^{-\frac{2-\sigma}{1-p}}
\end{equation*}
for some $C>0$ independent of $x$. Thus for $y\in\partial B_1$,
\begin{equation*}\begin{aligned}
v_{x,  \lambda}(y) &=\left(\frac{ \lambda}{|y-x|}\right)^{n-2} v\left(x+\frac{ \lambda^{2}(y-x)}{|y-x|^{2}}\right) \\
& \leq C  \lambda^{n-2}|x|^{-\frac{2-\sigma}{1-p}} \\
& \leq C|x|^{n-2-\frac{2-\sigma}{p-1}}
\\
&<\frac{1}{M} \leq \inf _{\partial B_{1}} v\leq v(y),
\end{aligned}\end{equation*}
provided $\epsilon<(\frac{1}{M C})^{(1 /(n-2-\frac{2-\sigma}{p-1}))}$. Since $v_{x,\overline\lambda}(y)=v(y)$ naturally holds on $y\in \partial B_{\overline\lambda}(x)$ and
$-\Delta (v-v_{x,\overline\lambda})\geq 0$
as calculated in \eqref{equ-maximumP}, by maximum principle we have $v_{x,\overline \lambda}(y)<v(y)$ in $B_1\setminus \overline{B_{\overline \lambda}(x)}$ and $\frac{\partial}{\partial\mathcal V}(v-v_{x,\overline\lambda})>0$ on $\partial B_{\overline\lambda}(x)$. By standard argument in moving spheres method as in \cite{15,xiong-2020}, we can move spheres a little further than $\overline\lambda$, contradicting to its definition. This finishes the proof of claim.

Let $r<\epsilon^{2}$ small enough, and $x_{1}, x_{2} \in \partial B_{r}$ such that
\begin{equation*}v\left(x_{1}\right)=\max _{\partial B_{r}} v\quad\text {and}\quad v\left(x_{2}\right)=\min _{\partial B_{r}} v.
\end{equation*}
Let $x_{3}=x_{1}+\frac{\epsilon\left(x_{1}-x_{2}\right)}{4\left|x_{1}-x_{2}\right|}, \lambda=\sqrt{\frac{\epsilon}{4}\left(\left|x_{1}-x_{2}\right|+\frac{\epsilon}{4}\right)}$.
It follows from \eqref{Claim} that
\begin{equation*}v_{x_{3}, \lambda}\left(x_{2}\right) \leq v\left(x_{2}\right).
\end{equation*}
By a direct computation,
\begin{equation*}v_{x_{3}, \lambda}\left(x_{2}\right)=\left(\frac{1}{\frac{4}{\epsilon}\left|x_{1}-x_{2}\right|+1}\right)^{\frac{n-2}{2}} v\left(x_{1}\right) \geq\left(\frac{1}{\frac{8 r}{\epsilon}+1}\right)^{\frac{n-2}{2}} v\left(x_{1}\right),
\end{equation*}
i.e.,
\begin{equation*}\max _{\partial B_{r}} v \leq\left(\frac{8 r}{\epsilon}+1\right)^{\frac{n-2}{2}} \min _{\partial B_{r}} v.
\end{equation*}
By the definition of $\overline{v}(|x|)$ and the Taylor expansion, \eqref{Equ-spherical-average} follows immediately.
\end{proof}

Theorem \ref{Thm-1-Punctured} follows immediately from Propositions \ref{Prop-2.3}, \ref{Prop-2.5} and \ref{prop-2.7}. Theorem \ref{Thm-1} follows from Theorem \ref{Thm-1-Punctured} by transform \eqref{Equ-transform}.

\section{Refined asymptotics}\label{Sec-3}

In this section, we classify the asymptotic behavior of $\bar u(t)$ as $t\rightarrow+\infty$, then the asymptotics of $u(\theta,t)$ follows from \eqref{equ-AsympRadial}. Similar to the classification of asymptotics of positive solutions of \eqref{Equ-averaged-homo} at $+\infty$, we separately discuss the three cases $b=0,a<0$; $b>0,a<0$; $a\geq 0$  in subsections \ref{Sec-3.1}, \ref{Sec-3.2}, \ref{Sec-3.3} respectively.

\subsection{When $b=0,a<0$}\label{Sec-3.1}

By transform \eqref{Equ-transform}, we have $\overline u(t)=r^{\frac{n-2}{2}}\overline v(r)$ for all $t\in(0,+\infty)$. Furthermore, by  elliptic estimates, we have the following estimates on derivatives of error $u-\overline u$.
\begin{lemma}\label{Lem-DerivativeEstimates}
  Let $u(\theta,t)$ be a   positive smooth solution of \eqref{Equ-Cylinder} with $b=0,a<0$ and \eqref{Equ-cond-cylinder}. Then $\overline u(t),\overline u'(t)=O(1)$ as $t\rightarrow+\infty$ and
  \begin{equation}\label{Equ-57}
  \begin{aligned}
\frac{\partial}{\partial t}(u-\overline{u}) &=\overline{u} O\left(e^{-t}\right), \\
\left|\nabla_{\theta}(u-\overline{u})\right| &=\overline{u} O\left(e^{-t}\right).
\end{aligned}\end{equation}
\end{lemma}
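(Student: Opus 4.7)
The plan is to treat $w := u - \overline u$ as a solution of a constant-coefficient linear elliptic equation on a cylinder and extract the desired first-derivative bounds by interior elliptic regularity applied on unit cylinders.

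First, the bounds $\overline u(t), \overline u'(t) = O(1)$ follow immediately by averaging the Theorem \ref{Thm-1} bounds $\|u(\cdot,t)\|_{L^\infty(\mathbb S^{n-1})}, \|\partial_t u(\cdot,t)\|_{L^\infty(\mathbb S^{n-1})} = O(1)$ over the sphere (using that differentiation in $t$ commutes with the spherical average). Subtracting the averaged equation \eqref{Equ-averaged} from \eqref{Equ-Cylinder} and using $\Delta_{\mathbb S^{n-1}} \overline u = 0$, the difference $w$ satisfies
\begin{equation*}
w_{tt} + \Delta_{\mathbb S^{n-1}} w + b\, w_t + a\, w = -\bigl(u^p - \overline{u^p}\bigr).
\end{equation*}
From \eqref{equ-AsympRadial} one has $u = \overline u(1 + O(e^{-t}))$, so for $t$ large $u/\overline u \in [\tfrac12, 2]$ and a pointwise Taylor expansion together with $\int_{\mathbb S^{n-1}}(u-\overline u)\,d\theta=0$ yields $|u^p - \overline{u^p}|\leq C \overline u^p e^{-t}$, while \eqref{equ-AsympRadial} itself already provides $|w|\leq C \overline u e^{-t}$.

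Applying interior Schauder (or $L^q$/Morrey) estimates to the constant-coefficient operator $\partial_t^2 + \Delta_{\mathbb S^{n-1}} + b \partial_t + a$ on the unit cylinder $Q_t := \mathbb S^{n-1}\times[t-1,t+1]$ then gives
\begin{equation*}
|w_t(\theta,t)| + |\nabla_\theta w(\theta,t)| \leq C \bigl(\|w\|_{L^\infty(Q_t)} + \|u^p - \overline{u^p}\|_{L^\infty(Q_t)}\bigr),
\end{equation*}
and combining this with the two pointwise bounds above produces \eqref{Equ-57}. The main obstacle, and the only genuinely delicate point, is ensuring that the factor which appears on the right is $\overline u(t)$ itself rather than $\sup_{s\in [t-1,t+1]}\overline u(s)$. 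This is handled by exploiting $|\overline u'|=O(1)$ on unit intervals (which forces $\overline u$ to change at most by an additive constant, which is enough whenever $\overline u(t)$ is bounded below) or, more uniformly, by pre-rescaling $\widehat u(\theta,s):=u(\theta,t+s)/\overline u(t)$ and running Schauder on the rescaled (still uniformly elliptic, bounded-coefficient) equation for $\widehat u - \overline{\widehat u}$ before un-rescaling; the factor $\overline u(t)$ then reappears with a uniform constant.
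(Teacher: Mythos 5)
Your proof follows the paper's overall plan: subtract the averaged equation, note that $w:=u-\overline u$ solves a constant-coefficient linear elliptic equation on the cylinder with right-hand side of size $O(\overline u^{\,p}e^{-t})$, and then apply interior elliptic estimates on unit cylinders $\mathbb S^{n-1}\times[t-1,t+1]$. You also correctly isolate the one delicate point, namely that the factor appearing on the right must be $\overline u(t)$ itself and not $\sup_{s\in[t-1,t+1]}\overline u(s)$. However, neither of your two proposed fixes actually closes this gap. The first one ($|\overline u'|=O(1)$ turns a multiplicative bound into an additive one) requires $\overline u$ to be bounded away from $0$, which is not available here: $\overline u$ may decay to $0$, and that is precisely the regime of \eqref{Equ-case1-D0}. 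The second fix, pre-rescaling by $\overline u(t)$, does not circumvent the issue either. Since $\widehat u(\theta,s)=u(\theta,t+s)/\overline u(t)=\bigl(\overline u(t+s)/\overline u(t)\bigr)\bigl(1+O(e^{-t})\bigr)$, the quantity $\|\widehat u\|_{L^\infty(\mathbb S^{n-1}\times[-1,1])}$ that the Schauder or $L^q$ estimate would feed on is exactly $\sup_{|s|\le 1}\overline u(t+s)/\overline u(t)$ up to a harmless factor, i.e.\ the very ratio you were trying to avoid estimating. Put differently, the rescaling fixes the normalization at $s=0$ but gives you no a priori control over $\widehat u$ at other values of $s$, so the argument is circular.

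What is genuinely needed, and what the paper uses, is a Harnack inequality. Writing the equation as $\partial_{tt}u+\Delta_{\mathbb S^{n-1}}u+(a+u^{p-1})u=0$ and noting that the zeroth-order coefficient $a+u^{p-1}$ is uniformly bounded (by the $L^\infty$ bound from Theorem~\ref{Thm-1}), the interior Harnack inequality on $\mathbb S^{n-1}\times[t-2,t+2]$ gives $\max_{\Omega_t}u\le C\min_{\Omega_t}u$ with $C$ independent of $t$; averaging over $\theta$ yields $\max_{[t-1,t+1]}\overline u\le C\,\overline u(t)$, which is precisely the missing ingredient. (Equivalently, one may apply a one-dimensional Harnack, e.g.\ via the Riccati substitution $r=(\log\overline u)'$, to the positive solution $\overline u$ of $\overline u''+q(t)\overline u=0$, where $q=a+\overline{u^p}/\overline u=a+\overline u^{\,p-1}(1+O(e^{-t}))=O(1)$ by \eqref{equ-AsympRadial}.) Once this ratio bound is in hand, the rest of your argument goes through as written; indeed your derivation of $\overline u,\overline u'=O(1)$ by directly averaging the gradient bound in Theorem~\ref{Thm-1} is slightly more direct than the paper's route through the ODE.
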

\begin{proof}
  By Theorem \ref{Thm-1}, we have $\overline u(t)=O(1)$. When $b=0$, $\bar u(t)$  satisfies \eqref{Equ-averaged} with $b=0$, i.e.,
  \begin{equation*}\overline{u}^{\prime \prime}+a \overline{u}+\overline{u}^{p}=\overline{u}^{p} O\left(e^{-t}\right)\quad\text{in}~(0,+\infty).
  \end{equation*}
  Thus $\overline u''(t)=O(1)$. Multiplying \eqref{Equ-averaged} by $\overline u'(t)$ and integrating by parts, we have  $\overline u'(t)=O(1)$. Furthermore, by \eqref{equ-AsympRadial} we have $|u-\overline u(t)|=O(e^{-t})$ as $t\rightarrow+\infty$.

  For any fixed $t>10$, applying Harnack inequality on $\Omega_{t}=\mathbb S^{n-1} \times[t-2, t+2]$ we have
  \begin{equation}\label{equ-temp-13}
  \max _{\Omega_{t}} u \leq C \min _{\Omega_{t}} u,~
  \max _{[t-1, t+1]} \overline{u} \leq C \overline{u}(t)
  \end{equation}
  for some constant $C$ independent of $t$. By a direct computation, $u-\overline u$ satisfies
  \begin{equation*}
  \partial_{tt}(u-\overline{u})+\Delta_{\mathbb{S}^{n-1}}(u-\overline{u})=F(u(\theta,t),\overline u(t), t)\quad\text{in}~\mathbb S^{n-1}\times(0,+\infty),
  \end{equation*}
  where
  \begin{equation*}F(u,\overline u, t)=-a(u-\overline{u})-u^{p}+\overline{u}^{p}+\overline{u}^{p} O\left(e^{-t}\right)
  \end{equation*}
  is a smooth function. By interior $L^q$-estimates and embeddings in compact Riemannian manifolds as in \cite{Aubin-Riemannian}, for any $0<\gamma<1$, there exists
  $C=C(a,n,\gamma,q)>0$ such that
  \begin{equation*}\begin{aligned}
\|u-\overline{u}\|_{C^{1, \gamma}\left(\Omega_{t_{0}}^{\prime}\right)}
&\leq \|u-\overline{u}\|_{W^{2, q}\left(\Omega_{t_{0}}^{\prime}\right)}\\
& \leq C\left\{\|u-\overline{u}\|_{L^{\infty}\left(\Omega_{t_{0}}\right)}+\|F
(u,\overline u,t)
\|_{L^{q}\left(\Omega_{t_{0}}\right)}\right\} \\
& \leq C\left\{\|u-\overline{u}\|_{L^{\infty}\left(\Omega_{t_{0}}\right)}+\|\overline{u}\|_{L^{\infty}\left(\Omega_{t_{0}}\right)} O\left(e^{-t}\right)\right\},
\end{aligned}\end{equation*}
where $\Omega_{t_{0}}^{\prime}:=\mathbb{S}^{n-1} \times\left[t_{0}-\frac{1}{2}, t_{0}+\frac{1}{2}\right] \subset \subset \Omega_{t_{0}}$. Therefore by the invariance of equation under translation, there exists $C>0$ such that
\begin{equation}\label{equ-temp-14}
\|u-\overline{u}\|_{C^{1, \gamma}\left(\Omega_{t}^{\prime}\right)} \leq C\|\overline{u}\|_{L^{\infty}\left(\Omega_{t}\right)}e^{-t}
\end{equation}
for all $t\geq t_0$.  The result follows immediately from \eqref{equ-temp-13} and \eqref{equ-temp-14}.
\end{proof}

As in the classification result,  all positive smooth solutions of \eqref{Equ-averaged-homo} with $b=0,a<0$ must satisfy $0<\psi\leq\left(-\frac{p+1}{2} a\right)^{\frac{1}{p-1}}$.
Now we prove the following lemma that gives us estimates of initial value problem on the difference.
\begin{lemma}\label{Lem-Esti-Initialvalue}
  For $a<0,$ let $w(t)>0,0<\psi \leq\left(-\frac{p+1}{2} a\right)^{\frac{1}{p-1}}$ be bounded smooth solutions of
  \begin{equation*}w^{\prime \prime}+a w+w^{p}=f(t)\quad\text {in }[0, T), \end{equation*}
  and
  \begin{equation*}\psi^{\prime \prime}+a \psi+\psi^{p}=0\quad\text {in }[0, T),
  \end{equation*}
  where $f(t)$ is smooth and bounded on $[0,T]$. Then there exists a constant $C(T)>0,$ depending only on $n, T, a, p$ and the upper bound of $|w|+\left|w^{\prime}\right|$ such that
  \begin{equation}\label{Equ-temp-estimate}
  \begin{array}{lllll}
  |w(t)-\psi(t)|&+& \left|w^{\prime}(t)-\psi^{\prime}(t)\right| \\
  &\leq&\displaystyle C(T)\left(|w(0)-\psi(0)|+\left|w^{\prime}(0)-\psi^{\prime}(0)\right|+\max _{0 \leq t \leq T}|f(t)|\right).\\
  \end{array}
  \end{equation}
\end{lemma}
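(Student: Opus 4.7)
\medskip

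\textbf{Proof plan.} The statement is a standard continuous-dependence estimate for a second-order ODE, so the plan is to linearize the difference equation and then apply Gronwall's inequality to a first-order system.

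First I would set $\varphi(t):=w(t)-\psi(t)$ and subtract the two equations to obtain
\begin{equation*}
\varphi''+a\varphi+\bigl(w^{p}-\psi^{p}\bigr)=f(t)\quad\text{on }[0,T).
\end{equation*}
By the mean value theorem, $w^{p}-\psi^{p}=p\,\xi(t)^{p-1}\varphi(t)$ for some $\xi(t)$ lying between $w(t)$ and $\psi(t)$. Since $\psi$ is bounded by $(-\tfrac{p+1}{2}a)^{1/(p-1)}$ and $w$ is bounded by assumption, the coefficient
\begin{equation*}
A(t):=a+p\,\xi(t)^{p-1}
\end{equation*}
is bounded on $[0,T]$ by a constant depending only on $a$, $p$ and the upper bound of $|w|+|w'|$ (which controls $\sup|w|$). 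Hence $\varphi$ solves the linear equation $\varphi''+A(t)\varphi=f(t)$ with $L^{\infty}$-bounded coefficient.

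Next I would recast this as the first-order system $\Phi(t):=\bigl(\varphi(t),\varphi'(t)\bigr)^{T}$, so that $\Phi'=M(t)\Phi+F(t)$, where
\begin{equation*}
M(t)=\begin{pmatrix}0 & 1\\ -A(t) & 0\end{pmatrix},\qquad F(t)=\begin{pmatrix}0\\ f(t)\end{pmatrix}.
\end{equation*}
Taking Euclidean norms and using $|M(t)|\leq C_{0}$, I would get
\begin{equation*}
|\Phi(t)|\leq |\Phi(0)|+\int_{0}^{t}|F(s)|\,ds+C_{0}\int_{0}^{t}|\Phi(s)|\,ds.
\end{equation*}
Gronwall's inequality then yields
\begin{equation*}
|\Phi(t)|\leq \Bigl(|\Phi(0)|+T\max_{[0,T]}|f|\Bigr)e^{C_{0}T},
\end{equation*}
which is exactly the bound \eqref{Equ-temp-estimate} with $C(T)=(1+T)e^{C_{0}T}$.

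There is no real obstacle here; the only thing to be careful about is making explicit that the constant $C_{0}$ bounding $A(t)$ depends only on $a$, $p$, $n$ and the stated upper bound of $|w|+|w'|$ (which, combined with the a priori bound on $\psi$, controls $\xi$ between them). Once $C_{0}$ is identified, the Gronwall step is routine and the dependence of $C(T)$ on $T$ comes out naturally from the exponential factor $e^{C_{0}T}$.
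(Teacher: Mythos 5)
Your proof is correct and follows essentially the same route as the paper: both set $\varphi = w - \psi$, observe that the nonlinear difference $w^p - \psi^p$ produces a bounded coefficient (you via the mean value theorem, the paper via the difference quotient $(w^p-\psi^p)/(w-\psi)$, which is the same thing), reduce to a first-order linear system with $L^\infty$ coefficient matrix, and conclude with an exponential-in-$T$ bound. The only cosmetic difference is that you close with Gronwall's inequality while the paper runs the Picard iteration explicitly and sums the series $\sum M(C_0 t)^k/k!$; these are interchangeable and yield the same $e^{C_0 T}$ factor.
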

\begin{proof}
  Let
  \begin{equation*}\phi(t):=w(t)-\psi(t),\quad A(t):=\left(w^{p}-\psi^{p}\right) /(w-\psi).
  \end{equation*}
  Then $A(t)$ is bounded for all $t\in[0,T]$ relying on the upper bound of $w,\psi$. Let $\mathbf{Y}(t):=\left(y_{1}, y_{2}\right)^{T}:=\left(\phi, \phi^{\prime}\right)^{T}$, then it satisfies
  \begin{equation*}\frac{\mathrm{d} \mathbf{Y}}{\mathrm{d} t}=\mathbf{A}(t) \mathbf{Y}(t)+\mathbf{F}(t),\end{equation*}
  where
  \begin{equation*}\mathbf{A}(t)=\left(\begin{array}{cc}
0 & 1 \\
-a-A(t) & 0
\end{array}\right) \quad \text {and} \quad \mathbf{F}(t)=\left(\begin{array}{c}
0 \\
f(t)
\end{array}\right).
\end{equation*}
Let
\begin{equation*}M:=|w(0)-\psi(0)|+\left|w^{\prime}(0)-\psi^{\prime}(0)\right|+\max _{0 \leq t \leq T}|f(t)|,
$$ and
$$
C_0:=\max_{t\in[0,T]}|-a-A(t)|+1.
\end{equation*}
By Picard-Lindel\"of theorem and the uniqueness of initial value problem, for $k\in\mathbb N, Y^{(0)}:=Y(0)$,
\begin{equation*}\mathbf{Y}^{(k)}(t):=\mathbf{Y}(0)+\int_{0}^{t}\left(\mathbf{A}(s) \mathbf{Y}^{(k-1)}(s)+\mathbf{F}(s)\right) d s
\rightarrow \mathbf Y(t)\quad\text{as}~k\rightarrow+\infty
\end{equation*}
for some solution $\mathbf Y(t)$ satisfying
\begin{equation*}|\mathbf{Y}(t)| \leq \sum_{k=0}^{\infty} \frac{M}{k !}(C_0 t)^{k}=M e^{C_0 t} \leq M e^{C_0 T}.
\end{equation*}
Then \eqref{Equ-temp-estimate} follows immediately by taking $C(T)=e^{C_0T}$.
\end{proof}

Now we give the proof of Theorem \ref{Thm-2} when $b=0,a<0$.
\begin{proof}[Proof of Theorem \ref{Thm-2}.\eqref{Case-1}]
  Multiply \eqref{Equ-Cylinder} by $u_t$ and integrate over $\mathbb S^{n-1}$ to discover
  \begin{equation*}\frac{\mathrm{d}}{\mathrm{d} t} \int_{\mathbb{S}^{n-1}} \frac{1}{2}\left[u_{t}^{2}+a u^{2}+\frac{2}{p+1} u^{p+1}-\left|\nabla_{\theta} u\right|^{2}\right] d \theta=0.
  \end{equation*}
Let
\begin{equation}\label{Equ-def-H}
H(\alpha, \beta):=\alpha^{2}+a \beta^{2}+\frac{2}{p+1} \beta^{p+1},
\end{equation}
and
  \begin{equation*}D(t):=H\left(\overline{u}^{\prime}(t), \overline{u}(t)\right)=\overline{u}^{\prime 2}+a \overline{u}^{2}+\frac{2}{p+1} \overline{u}^{p+1}.
  \end{equation*}
  By Lemma \ref{Lem-DerivativeEstimates}, for any $0<t<s<\infty$, we have
  \begin{equation}\label{equ-temp-15}
  D(t)-D(s)=\left.\left(\overline{u}^{2}+\overline{u}^{\prime 2}\right) O\left(e^{-\cdot}\right)\right|_{s} ^{t}=O(e^{-t})\rightarrow0\quad\text{as}~t\rightarrow+\infty.
  \end{equation}
  Thus $D(t)$ forms a Cauchy sequence and admits a limit $D_{\infty}:=\lim _{t \rightarrow +\infty} D(t)$. Sending $s\rightarrow+\infty$ in \eqref{equ-temp-15}, we have
  \begin{equation}\label{Equ-vanish-D}
  D(t)=D_{\infty}+\left(\overline{u}^{2}(t)+\overline{u}^{\prime 2}(t)\right) O\left(e^{-t}\right).
  \end{equation}
  As in the classification result, we have
  \begin{equation*}-\frac{p-1}{p+1}(-a)^{\frac{p+1}{p-1}} \leq D_{\infty} \leq 0,
  \end{equation*}
  otherwise  $\overline u(t)$ vanishes in finite time.

  When $D_{\infty}=0$, $u$ vanishes at infinity. We start with the proof of that $\overline u$ cannot have critical point for large $t$. By contradiction, we assume there exists $\tilde t_i\rightarrow+\infty$ such that $\overline u'(\tilde t_i)=0$. By \eqref{Equ-57} and \eqref{Equ-vanish-D},
  $$
  a+\dfrac{2}{p+1}\overline u^{p-1}(\tilde t_i)=O(e^{-\tilde t_i}),
  $$
  thus $\overline u(\tilde t_i)\rightarrow (-\frac{p+1}{2}a)^{\frac{1}{p-1}}$ as $i\rightarrow+\infty$. However by \eqref{Equ-averaged}, we have
  $$
  \overline u''(\tilde t_i)+a\overline u(\tilde t_i)+\overline u^p(\tilde t_i)\rightarrow 0,
  $$
  thus $\overline u''(\tilde t_i)>0$ for large $i$, contradicting to the assumption that $\{\tilde t_i\}$ is a sequence of critical points. Thus $\overline u(t)$ is monotone and admitting a limit $\overline u(\infty)$ at infinity. It follows from \eqref{Equ-averaged} that
  $$
  \lim_{t\rightarrow +\infty}\overline u''(t)=0\quad\text{and}\quad
  \lim_{t\rightarrow +\infty}\overline u'(t)=0.
  $$
  By \eqref{Equ-averaged},
  $$
  \lim_{t\rightarrow+\infty}\overline u(t)=0\quad\text{or}~
  \lim_{t\rightarrow+\infty}\overline u(t)=(-a)^{\frac{1}{p-1}}.
  $$
  Since $D_{\infty}=0$, the second case cannot happen. Thus
  \begin{equation*}\lim _{t \rightarrow \infty} \overline{u}(t)=\lim _{t \rightarrow \infty} \overline{u}^{\prime}(t)=0,
  \end{equation*}
  and there exists sufficiently large $t_0\gg 0$ such that
  \begin{equation*}\overline{u}^{\prime}(t)<0\quad \text {for } t>t_{0}.
  \end{equation*}
  By standard ODE analysis as in \cite{Coddington}, using \eqref{Equ-vanish-D}, for any $0<\lambda<\sqrt{-a}$, there exists $T_0>t_0$ relying on $\lambda$ such that
  $$
  \overline u'^2(t)\geq \lambda^2\overline u^2(t)\quad\text{for } t>T_0.
  $$
  Therefore
  \begin{equation*}\overline{u}=O\left(e^{-\lambda t}\right)\quad \text {and}\quad \overline{u}^{\prime}=O\left(e^{-\lambda t}\right).
  \end{equation*}
  By \eqref{Equ-vanish-D} again, for all $t>T_0$,
  \begin{equation*}\frac{-\overline{u}^{\prime}}{\overline{u}} \geq \sqrt{-a}-c e^{-t},
  \end{equation*}
  and hence
  \begin{equation}\label{equ-temp-20}
  \overline{u}(t) \leq C e^{-\sqrt{-a} t}.
  \end{equation}
  The upper bound in \eqref{Equ-case1-D0} follows by
  \eqref{equ-AsympRadial} and \eqref{equ-temp-20}.
  Similarly, for sufficiently large $t$,
  $$
  0<\dfrac{-\overline u'}{\overline u}\leq \sqrt{-a}+ce^{-t}.
  $$
  Thus by a direct calculus,
  there exists $C>0$ such that
  $$
  \overline u(t)\geq Ce^{-\sqrt{-a}t}
  $$
  for sufficiently large $t$. This proves the lower bound in \eqref{Equ-case1-D0}.

  When $D_{\infty}<0$, by \eqref{Equ-vanish-D}, there exists $C_1>0$ such that
  \begin{equation}\label{equ-temp-16}
  \left|H\left(\overline{u}^{\prime}(t), \overline{u}(t)\right)-D_{\infty}\right|
  \leq C_1e^{-t},\quad\forall~t\geq 1.
  \end{equation}
  By Lemma \ref{Lem-DerivativeEstimates} and equation \eqref{Equ-averaged}, $||\overline u(t)||_{C^2(\mathbb S^{n-1}\times(0,+\infty))}$ is bounded. By Arzela-Ascoli theorem, there exists a sequence of $t_i\rightarrow+\infty$ and a positive solution of \eqref{Equ-averaged-homo} with $b=0,a<0$ such that
  \begin{equation}\label{Equ-C1loc-Converge}
  \overline{u}\left(t_{i}+\cdot\right) \rightarrow \psi(\cdot)\quad \text {in } C_{\mathrm{loc}}^{1}(-\infty, \infty)\quad\text {as } i \rightarrow \infty.
  \end{equation}
  By the classification result of \eqref{Equ-averaged-homo} as in Appendix, there exist $0<m \leq M<\left(-\frac{p+1}{2} a\right)^{\frac{1}{p-1}}$ such that
  $$
  0<\inf_{t\in\mathbb R}\psi(t)=m\leq M=\sup_{t\in\mathbb R}\psi(t).
  $$
  If $D_{\infty}=-\frac{p-1}{p+1}(-a)^{\frac{p+1}{p-1}},$ then $\psi \equiv m=(-a)^{\frac{1}{p-1}}$ and thus
  $$
  \lim_{t\rightarrow+\infty}\overline u'(t)=0,\quad\lim_{t\rightarrow+\infty}\overline u(t)=(-a)^{\frac{1}{p-1}}.
  $$
  Since $\nabla H(0, m)=0$ and
  \begin{equation*}\nabla^{2} H(0, m)=\left(\begin{array}{cc}
2 & 0 \\
0 & 2 a(1-p)
\end{array}\right)>0,
\end{equation*}
there exist $A>0$ and $T_0\gg 0$ relying on $A$ such that
\begin{equation*}H\left(\overline{u}^{\prime}, \overline{u}\right) \geq D_{\infty}+A\left(\overline{u}^{\prime 2}(t)+(\overline{u}(t)-m)^{2}\right)
\end{equation*}
for all $t\geq T_0.$ Together with \eqref{equ-temp-16}, we have
$$
|\overline u'(t)|^2+|\overline u(t)-m|^2\leq A^{-1}C_1e^{-t}
$$
for all $t\geq T_0$. Thus we obtain \eqref{Equ-case1-D1} with $\psi\equiv (-a)^{\frac{1}{p-1}},\gamma=\frac{1}{2}$  by \eqref{equ-AsympRadial} and the formula above.

  If $-\frac{p-1}{p+1}(-a)^{\frac{p+1}{p-1}}<D_{\infty}<0$, $\psi$ is a periodic function. There are various similar strategies to prove the convergence rate to periodic functions, see for instance \cite{CoupledSystem,KMPS,Marques-CVPDE}. The proof here is similar to the one by Han-Li-Teixeira \cite{Han-Li-T-Simgak}.  By translating, we assume $\psi(0)=m$ and $\psi^{\prime}(0)=0 .$ Hence
  \begin{equation*}0<\psi^{\prime \prime}(0)=-a m-m^{p}=: l.
  \end{equation*}
  By \eqref{Equ-averaged} and $b=0$, there exists $\epsilon_{1}>0, T_{0}>10$ such that
  \begin{equation}\label{Equ-positive-SecodOrde}
  \overline{u}^{\prime \prime}(t)=-a \overline{u}-\overline{u}^{p}+\overline{u}^{p} O\left(e^{-t}\right) \geq \frac{l}{2}>0
  \end{equation}
  for all $|\overline{u}-m| \leq \epsilon_{1}, t \geq T_{0}$. Since $\left.\partial_{s}\right|_{s=m} H(0, s)=-2 l<0$, for $0<A<2l$, there exists $0<\epsilon_2<\epsilon_1$ such that
  \begin{equation}\label{Equ-temp-18}
  |H(0, s)-H(0, m)| \geq A|s-m|,\quad\forall~|s-m| \leq \epsilon_{2}.
  \end{equation}
  By \eqref{Equ-C1loc-Converge} and Lemma \ref{Lem-Esti-Initialvalue}, we can choose $i_0$ sufficiently large such that $t_{i_{0}} \geq T_{0}+2T$ and
  \begin{equation}\label{Equ-Smallness-pri}
  \left|\overline{u}\left(t_{i_{0}}+t\right)-\psi(t)\right|+\left|\overline{u}^{\prime}\left(t_{i_{0}}+t\right)-\psi^{\prime}(t)\right| \leq \gamma \epsilon_{2},\quad\forall~|t| \leq 2 T,
  \end{equation}
  where $\gamma\in (0,\frac{1}{2})$ to be determined and $T$ is the minimal period of $\psi$.

  We claim that there exists $\left|\delta_{0}\right|<2 l^{-1}\left|\overline{u}^{\prime}\left(t_{i_{0}}\right)\right|$ such that
  \begin{equation}\label{Equ-claim}
  \overline{u}^{\prime}\left(t_{i_{0}}+\delta_{0}\right)=0.
  \end{equation}
  Let $\Lambda:=\sup _{\mathbb{R}}\left\{\left|\psi^{\prime \prime}\right|,\left|\psi^{\prime}\right|\right\}$, then
  \begin{equation*}|\psi(t)-m|+\left|\psi^{\prime}(t)\right| \leq 2 \Lambda|t|<\frac{\epsilon_{2}}{2},\quad\forall~|t|<\frac{\epsilon_2}{4\Lambda}.
  \end{equation*}
  Together with \eqref{Equ-Smallness-pri}, we have
  \begin{equation}\label{equ-temp-17}
  \left|\overline{u}\left(t_{i_{0}}+t\right)-m\right|+\left|\overline{u}^{\prime}\left(t_{t_{0}}+t\right)\right|<\epsilon_{2},\quad\forall~|t|<\frac{\epsilon_2}{4\Lambda}.
  \end{equation}
  By \eqref{Equ-positive-SecodOrde}, $\overline{u}^{\prime \prime}\left(t_{i_{0}}+t\right) \geq \frac{l}{2}$ for all $|t|\leq\frac{\epsilon_2}{4\Lambda}$. With \eqref{Equ-Smallness-pri}, there exists $\delta_0$ such that $\overline u'(t_i+\delta_0)=0$ with estimate
  \begin{equation*}\left|\delta_{0}\right| \leq \frac{2}{l}\left|\overline{u}^{\prime}\left(t_{i_{0}}\right)\right| \leq \frac{2}{l} \gamma \epsilon_{2}<\frac{\epsilon_{2}}{4 \Lambda},
  \end{equation*}
  where $\gamma<\frac{l}{8\Lambda}$ is fixed relying only on $l,\Lambda$. This proves the claim \eqref{Equ-claim}. Furthermore, with \eqref{equ-temp-17} we have $\left|\overline{u}\left(t_{i_{0}}+\delta_{0}\right)-m\right|<\epsilon_{2}$. Therefore by \eqref{Equ-temp-18} and \eqref{equ-temp-16},
  $$
  \begin{array}{lllll}
    A\left|\overline{u}\left(t_{i_{0}}+\delta_{0}\right)-m\right|
    &\leq & \left|H\left(0, \overline{u}\left(t_{i_{0}}+\delta_{0}\right)\right)-H(0, m)\right|\\
    &=& \left|H\left(\overline{u}^{\prime}\left(t_{i_{0}}+\delta_{0}\right), \overline{u}\left(t_{i_{0}}+\delta_{0}\right)\right)-H(0, m)\right|\\
    &=& \left|H\left(\overline{u}^{\prime}\left(t_{i_{0}}+\delta_{0}\right), \overline{u}\left(t_{i_{0}}+\delta_{0}\right)\right)-D_{\infty}\right|\\
    &\leq &C_{0} e^{-\left(t_{i_{0}}+\delta_{0}\right)},
  \end{array}
  $$
  i.e.,
  \begin{equation*}\left|\overline{u}\left(t_{i_{0}}+\delta_{0}\right)-m\right| \leq C_{0} A^{-1} e^{-\left(t_{i_{0}}+\delta_{0}\right)}.\end{equation*}
  Let $\tau_{0}=t_{i_{0}}+\delta_{0}$, by Lemma \ref{Lem-Esti-Initialvalue},
  \begin{equation*}\begin{array}{ll}
\left|\overline{u}\left(\tau_{0}+t\right)-\psi(t)\right|+\left|\overline{u}^{\prime}\left(\tau_{0}+t\right)-\psi^{\prime}(t)\right| \\
\leq C(T)\left(\left|\overline{u}\left(\tau_{0}\right)-m\right|+C_{2} e^{-\tau_{0}}\right) & \\
\leq C(T)\left(C_{0} A^{-1}+C_{2}\right) e^{-\tau_{0}}& \text { for } 0 \leq t \leq 2 T.
\end{array}\end{equation*}
Repeating the arguments above, choosing large $T_0$ such that $C(T)\left(C_{0} A^{-1}+C_{2}\right) e^{-t}<\gamma \epsilon_{2}$ for all $t\geq T_0$, we obtain $\delta_{1}$ such that $\tau_{1}=\tau_{0}+T+\delta_{1}$ satisfies
\begin{equation*}\begin{array}{l}
\overline{u}^{\prime}\left(\tau_{1}\right)=0; \\
\left|\delta_{1}\right| \leq 2 l^{-1}\left|\overline{u}^{\prime}\left(\tau_{0}+T\right)\right| \leq 2 l^{-1} C(T)\left(C_{0} A^{-1}+C_{2}\right) e^{-\tau_{0}}; \\
\left|\overline{u}\left(\tau_{1}\right)-m\right| \leq C_{1} A^{-1} e^{-\tau_{1}}.
\end{array}\end{equation*}
We can now inductively obtain $\tau_{j}=\tau_{j-1}+T+\delta_{j}$ such that
\begin{equation*}\begin{array}{l}
\overline{u}^{\prime}\left(\tau_{j}\right)=0; \\
\left|\delta_{j}\right| \leq 2 l^{-1}\left|\overline{u}^{\prime}\left(\tau_{j-1}+T\right)\right| \leq 2 l^{-1} C(T)\left(C_{0} A^{-1}+C_{2}\right) e^{-\tau_{j-1}}; \\
\left|\overline{u}\left(\tau_{j}\right)-m\right| \leq C_{1} A^{-1} e^{-\tau_{j}},\quad \text{for } 0 \leq t \leq 2 T,
\end{array}\end{equation*}
and
\begin{equation}\label{equ-423}
\begin{array}{llll}
&\left|\overline{u}\left(\tau_{j}+t\right)-\psi(t)\right|+\left|\overline{u}^{\prime}
\left(\tau_{j}+t\right)-\psi^{\prime}(t)\right|\\
\leq & C(T)\left(C_{0} A^{-1}+C_{2}\right) e^{-\tau_{j}}
\end{array}
\end{equation}
for all $0\leq t\leq 2T$. Set $s_{j}=\tau_{j}-jT$,  then $s_{j}=s_{j-1}+\delta_{j}$ and
\begin{equation*}s_{\infty}:=\lim _{j \rightarrow \infty} s_{j}=\sum_{j=1}^{\infty} \delta_{j}<\infty.
\end{equation*}
Applying \eqref{equ-423} with $\tau=\tau_{j}+t,$ for $\tau_{j} \leq \tau \leq \tau_{j+1}$,
\begin{equation*}\begin{aligned}
&\left|\overline{u}(\tau)-\psi\left(\tau-s_{j}\right)\right|+\left| \overline{u}^{\prime}(\tau)-\psi^{\prime}\left(\tau-s_{j}\right)\right| \\
=&\left|\overline{u}(\tau)-\psi\left(\tau-\tau_{j}\right)\right|+\left|\overline{u}^{\prime}(\tau)-\psi^{\prime}\left(\tau-\tau_{j}\right)\right| \\
\leq & C(T)\left(C_{0} A^{-1}+C_{2}\right) e^{-\tau_{j}} \leq C e^{-\tau_{j}}.
\end{aligned}\end{equation*}
Thus
$$
\begin{array}{lllll}
  & \left|\overline{u}(\tau)-\psi\left(\tau-s_{\infty}\right)\right|+\left|\overline{u}^{\prime}(\tau)-\psi^{\prime}\left(\tau-s_{\infty}\right)\right|\\
\leq & \left|\overline{u}(\tau)-\psi\left(\tau-s_{j}\right)\right|+\left|\overline{u}^{\prime}(\tau)-\psi^{\prime}\left(\tau-s_{j}\right)\right|
\\
&+\left|\psi\left(\tau-s_{j}\right)-\psi(\tau-s_{ \infty})\right|
+\left|\psi^{\prime}\left(\tau-s_{j}\right)-\psi^{\prime}\left(\tau-s_{\infty}\right)\right|\\
\leq & C e^{-\tau_{j}}+\Lambda\left|s_{j}-s_{\infty}\right|\\
\leq & C e^{-\tau_{j}}+\Lambda \displaystyle \sum_{k=j+1}^{\infty}\left|\delta_{k}\right|\\
\leq &C e^{-\tau_{j}} \leq C e^{-\tau}.
\end{array}
$$
Thus we obtain \eqref{Equ-case1-D1} with $\gamma=1$ by \eqref{equ-AsympRadial} and the formula above. This completes the proof of case \eqref{Case-1} in Theorem \ref{Thm-2}.
\end{proof}

\subsection{When $b>0,a<0$}\label{Sec-3.2}

As proved in the Appendix, there are only two global non-negative solutions $\psi\equiv (-a)^{\frac{1}{p-1}}$ and $\psi\equiv 0$ of \eqref{Equ-averaged-homo}. Here in this subsection, we prove that for local positive solutions, $\overline u(t)$ also converge to either of these two quantities.

\begin{lemma}\label{Lem-3.3}
  Let $u$ be a positive solution of \eqref{Equ-Cylinder} and $a,b,p$ verifies \eqref{Equ-cond-cylinder} with $b>0,a<0$. Then
  \begin{equation}\label{Equ-limit-subc}
  \lim_{t\rightarrow+\infty}\overline u''(t)=\lim_{t\rightarrow+\infty}\overline u'(t)=0\quad\text{and}\quad\lim_{t\rightarrow+\infty}\overline u(t)=0~\text{or}~(-a)^{\frac{1}{p-1}}.
  \end{equation}
\end{lemma}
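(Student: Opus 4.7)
The plan is to exploit the dissipation produced by the damping term $b\overline{u}'$ in \eqref{Equ-averaged}; because $b>0$, the averaged dynamics behaves as a damped gradient flow, so only equilibria can arise as limit points of $\overline u$.

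The preliminary step is to derive an analogue of Lemma \ref{Lem-DerivativeEstimates} in the $b>0$ setting, obtaining uniform bounds on $\overline{u},\overline{u}',\overline{u}''$ on $(0,+\infty)$ and the derivative estimate $|u-\overline u|+|\nabla_\theta(u-\overline u)|+|\partial_t(u-\overline u)|=\overline{u}\,O(e^{-t})$. The interior $L^q$ argument used in Lemma \ref{Lem-DerivativeEstimates} applies verbatim, the only change being an extra bounded first-order term $b\partial_t(u-\overline u)$ in the equation for $u-\overline u$, which is harmless.

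Using \eqref{Equ-averaged}, the Hamiltonian $H$ from \eqref{Equ-def-H} satisfies
\begin{equation*}
\frac{d}{dt}H(\overline{u}'(t),\overline{u}(t)) = 2\overline{u}'\bigl(\overline{u}''+a\overline{u}+\overline{u}^p\bigr) = -2b\,\overline{u}'^2(t) + 2\overline{u}'(t)\overline{u}^p(t)\,O(e^{-t}).
\end{equation*}
Since $\overline{u},\overline{u}'$ are bounded, $H$ is bounded and integration from $t_0$ to $T$ gives $\int_{0}^{\infty} \overline{u}'^2\,dt<\infty$ together with the existence of $H_\infty := \lim_{t\to\infty} H(\overline{u}'(t),\overline{u}(t))$. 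Because $\overline{u}''$ is bounded, $\overline{u}'^2$ is Lipschitz continuous, hence uniformly continuous; combined with its integrability, Barb\u{a}lat's lemma forces $\overline{u}'(t)\to 0$.

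Next, since $\overline{u}'(t)\to 0$, the scalar $g(\overline{u}(t))$ with $g(s):=as^{2}+\tfrac{2}{p+1}s^{p+1}$ converges to $H_\infty$. I would then argue that the $\omega$-limit set
\begin{equation*}
\omega_\infty := \{L\ge 0 : \overline{u}(t_i)\to L \text{ for some } t_i\to\infty\}
\end{equation*}
is a single point. Indeed $\omega_\infty\subset g^{-1}(H_\infty)\cap[0,\infty)$, which is a discrete set because $g'(s)=2s(a+s^{p-1})$ vanishes only at $s=0$ and $s=(-a)^{1/(p-1)}$; and $\omega_\infty$ is connected since $\overline{u}$ is bounded and continuous. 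Hence $\omega_\infty=\{L\}$, and $\overline{u}(t)\to L\ge 0$.

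Finally, plugging $\overline{u}'\to 0$ and $\overline{u}\to L$ into \eqref{Equ-averaged} yields
\begin{equation*}
\overline{u}''(t) = -b\overline{u}'(t)-a\overline{u}(t)-\overline{u}^{p}(t)+\overline{u}^{p}(t)O(e^{-t}) \longrightarrow -aL-L^p.
\end{equation*}
If this limit were nonzero, then $\overline{u}'$ would grow linearly in $t$, contradicting $\overline{u}'\to 0$. Therefore $-aL-L^p=0$, which combined with $L\ge 0$ forces $L\in\{0,(-a)^{1/(p-1)}\}$, and simultaneously $\overline{u}''(t)\to 0$, proving \eqref{Equ-limit-subc}.

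The main obstacle is ruling out oscillation of $\overline{u}$ between different admissible values: for $H_\infty\in\bigl(-\tfrac{p-1}{p+1}(-a)^{(p+1)/(p-1)},\,0\bigr)$ the level set $g^{-1}(H_\infty)\cap(0,\infty)$ actually contains two points, so monotonicity of the energy alone does not pin down the limit. The additional ingredient needed is the connectedness of the $\omega$-limit set of a bounded continuous trajectory, which together with the discreteness of $g^{-1}(H_\infty)$ closes the argument.
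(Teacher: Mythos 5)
Your proof is correct and follows essentially the same path as the paper's: integrate the dissipation identity to get $\int^\infty \overline u'^2<\infty$, use a Barb\u{a}lat-type argument (boundedness of $\overline u''$ plus integrability of $\overline u'^2$) to conclude $\overline u'\to 0$, and then identify the possible limits of $\overline u$ from the equation. Two small differences, both in your favor: (1) the paper establishes $\overline u''\to 0$ by a second, separate multiplier argument (``multiplying by $\overline u''$ and using a similar technique''), which implicitly requires uniform control of $\overline u'''$; you instead deduce $\overline u''\to 0$ directly from \eqref{Equ-averaged} once $\overline u'\to 0$ and $\overline u\to L$ are known, which is cleaner and avoids that extra step. (2) The paper's closing sentence (``The result follows immediately since $u$ is a smooth solution'') glosses over exactly the oscillation issue you flag; your explicit use of connectedness of the $\omega$-limit set together with the discreteness of $g^{-1}(H_\infty)\cap[0,\infty)$ is the correct way to rule out $\overline u$ hopping between the two preimages of $H_\infty$, and it makes the paper's terse step rigorous. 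One minor phrasing slip: the discreteness of $g^{-1}(H_\infty)$ does not follow merely from the finitely many zeros of $g'$ --- it follows because $g$ is strictly monotone on $[0,(-a)^{1/(p-1)}]$ and on $[(-a)^{1/(p-1)},\infty)$, so each level set has at most two points; you clearly have this in mind, but stating it that way is tighter.
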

\begin{proof} The proof is similar to the one of Theorem 1.3 in \cite{CGS}.
  By Theorem \ref{Thm-1}, $\overline u(t)=O(1)$ and $\overline u'(t)=O(1)$ as $t\rightarrow+\infty$. By \eqref{Equ-averaged},  $\overline u''(t)=O(1)$. Multiply \eqref{Equ-averaged} by $\overline u'(t)$ and integral over $0<s<t$ to obtain
  \begin{equation*}\left.\frac{1}{2} \overline{u}^{\prime 2}\right|_{s} ^{t}+\int_{s}^{t} b \overline{u}^{2} d \tau+\left.\frac{1}{2}a \overline{u}^{2}\right|_{s} ^{t}+\left.\frac{\overline{u}^{p+1}}{p+1}\right|_{s} ^{t}+O\left(e^{-s}\right)=0.
  \end{equation*}
  Since $\overline u',\overline u$ are bounded, sending $t\rightarrow+\infty$ and we have $\int_{s}^{\infty} \overline{u}^{\prime 2}(\tau) d \tau<\infty$. Now we claim that
  \begin{equation}\label{equ-limit-1der}
    \lim_{t\rightarrow+\infty}\overline u'(t)=0.
  \end{equation}
  By contradiction, we assume that \eqref{equ-limit-1der} fails, then there exists $\epsilon>0$ and a sequence of $t_j\rightarrow+\infty$ such that $|\overline u'(t_j)|\geq 2\epsilon$. Let $L:=\sup_{t\in\mathbb R^+}|\overline u''(t)|$, then $|\overline u'(t)|\geq \epsilon$ for all $|t-t_j|\leq \frac{\epsilon}{L}$. By choosing subsequence, we may assume $t_j>t_{j-1}+\frac{2\epsilon}{L}$. Thus for any finite $N$,
  $$
  \int_{s}^{\infty} \overline{u}'^2(\tau) d \tau\geq
  \sum_{j=1}^N\int_{t_j}^{t_{j+1}}\overline u'^2(\tau)d\tau\geq \dfrac{2\epsilon^3}{L}N\rightarrow +\infty
  $$
  as $N\rightarrow+\infty$, contradicting to the finite integral proved above. Using similar technique, multiplying \eqref{Equ-averaged} by $\overline u''(t)$ and we can prove $\lim_{t\rightarrow+\infty}\overline u''(t)=0$.

  Thus sending $t\rightarrow+\infty$ in \eqref{Equ-averaged}, we have
  $$
  \lim_{t\rightarrow+\infty}a\overline u(t)+\overline u^p(t)=0.
  $$
  The result follows immediately since $u$ is a smooth solution.
\end{proof}
Based on \eqref{Equ-limit-subc}, it remains to prove the convergence speed of $\overline u(t)$, then case \eqref{Case-2} in Theorem \ref{Thm-2} follows immediately. We start with the easier case where $\overline u(t)\rightarrow 0$ at infinity.

\begin{proof}[Proof of Theorem \ref{Thm-2}.\eqref{Case-2}.\eqref{equ-case2-D0}]
  By standard ODE technique as in \cite{Coddington}, the decay rate of $\overline u(t)$ is controlled by the negative root of the characteristic equation. Let $\lambda_1,\lambda_2$ be the roots of
  $$
  \lambda^2+b\lambda+a=0,
  $$
  i.e.,
  $$
  \lambda_{1}:=\dfrac{-b-\sqrt{b^2-4a}}{2}<0<\lambda_2:=\dfrac{-b+\sqrt{b^2-4a}}{2}.
  $$
  In fact, by \eqref{Equ-averaged} and $\overline u(t)\rightarrow 0$, for any $\epsilon>0$ sufficiently small to be determined, there exists $T_0\gg 1$ such that
  $$
  \overline u''+b\overline u'+(a+\epsilon)\overline u>0,\quad\forall~ t\geq T_0.
  $$
  Since $a<0$ and $\overline u(t)>0$, we may choose $\epsilon$ small such that $a+\epsilon<0$, then
  we have maximum principle as the following. If $\varphi\in C^2([T_0,+\infty))$ is a classical solution of
  $$
  \left\{
  \begin{array}{llll}
    \varphi''+b\varphi'+(a+\epsilon)\varphi\leq 0,& \text{in }(T_0,+\infty),\\
    \varphi(T_0)\geq 0, & \text{at }t=T_0,\\
    \varphi(t)\rightarrow 0, & \text{as }t\rightarrow+\infty,\\
  \end{array}
  \right.
  $$
  then $\varphi\geq 0$.
  Let
  $$
  \lambda_{i,\epsilon}:=\dfrac{-b\mp\sqrt{b^2-4(a+\epsilon)}}{2},\quad i=1,2.
  $$
  Then $e^{\lambda_{1,\epsilon}t}$ is a  positive solutions of homogeneous equation $\psi_{\epsilon}''(t)+b\psi_{\epsilon}'
  +(a+\epsilon)\psi_{\epsilon}=0
  $. Thus by maximum principle as above, we have
  \begin{equation}\label{equ-estimate-rough-2}
  0\leq \overline u(t)\leq Ce^{\lambda_{1,\epsilon}t}
  \end{equation}
  for some $C>0$, for all $t\geq T_0$.

  Furthermore, by \eqref{Equ-averaged} and
  Wronskian formula, there exist  constants $C_1,C_2$ such that for all $t\geq T_0$,
  \begin{equation}\label{Equ-wronski-3}
  \begin{array}{llll}
  \overline u(t)&=& C_1e^{\lambda_1t}+C_2e^{\lambda_2t}\\
  &&\displaystyle-e^{\lambda_1t}\int_{T_0}^t
  \dfrac{e^{\lambda_2\tau}}{W(\tau)}F(\tau)d\tau
  +e^{\lambda_2t}\int_{+\infty}^t
  \dfrac{e^{\lambda_1\tau}}{W(\tau)}F(\tau)d\tau,\\
  \end{array}
  \end{equation}
  where $W(\tau)$ is the Wronskian determinant $$
  W(\tau)=(\lambda_2-\lambda_1)e^{(\lambda_1+\lambda_2)\tau}=\sqrt{b^2-4a}e^{(\lambda_1+\lambda_2)\tau},
  $$
  and
  $$
  F(\tau)=\overline u^p(\tau)\cdot (-1+O(e^{-\tau}))\quad\text{as}~\tau\rightarrow 0.
  $$
  Since $\lambda_2>0$ and $\overline u(t)\rightarrow 0$ at infinity, we have $C_2=0$ by triangle inequality.

  By fixing $\epsilon$ sufficiently small such that $p\lambda_{1,\epsilon}<\lambda_1<0$, we put \eqref{equ-estimate-rough-2} into Wronskian formula \eqref{Equ-wronski-3} to obtain
  $$
  \overline u(t)\leq Ce^{\lambda_1t}+
  Ce^{\lambda_1t}\int_{T_0}^{t}e^{-\lambda_1\tau}\cdot e^{p\lambda_{1,\epsilon}\tau}d\tau
  +Ce^{\lambda_2t}\int_t^{+\infty}e^{-\lambda_2\tau}\cdot e^{p\lambda_{1,\epsilon}\tau}d\tau,
  $$
  i.e.,
  $\overline u(t)\leq Ce^{\lambda_1t}$
  and the upper bound in \eqref{equ-case2-D0} follows immediately from \eqref{equ-AsympRadial}. Similarly, we  choose sufficiently small $\epsilon'<0$ such that $a+\epsilon'<0$ and $p\lambda_1<\lambda_{1,\epsilon'}$,
  where
  $$
  \lambda_{1,\epsilon'}:=\dfrac{-b-\sqrt{b^2-4(a+\epsilon')}}{2}.
  $$
   By maximum principle as above, we have
  \begin{equation}\label{Equ-temp-temp}
  \frac{1}{C}e^{\lambda_{1,\epsilon'}t}\leq \overline u(t)\leq Ce^{\lambda_1t}
  \end{equation}
  for some $C>0$ and large $t$.
  By \eqref{Equ-wronski-3} and the upper estimate in \eqref{Equ-temp-temp},
  there exists constant $C_3$ such that
  $$
  \bar u(t)=C_3e^{\lambda_1t}+O(e^{p\lambda_1t}).
  $$
  By the lower bound in \eqref{Equ-temp-temp} and triangle inequality, we have $C_3>0$.
  Thus $\bar u(t)\geq Ce^{\lambda_1t}$ for some $C>0$ and sufficiently large $t$. This proves the lower bound in \eqref{equ-case2-D0}.
\end{proof}

Now we prove the convergence speed of $\overline u(t)\rightarrow (-a)^{\frac{1}{p-1}}$ at infinity.

\begin{proof}[Proof of Theorem \ref{Thm-2}.\eqref{Case-2}.\eqref{equ-case2-D1}]
  Let
  \begin{equation*}\xi(t):=\overline{u}(t)-c_{0},
  ~\text{where}~c_{0}:=(-a)^{\frac{1}{p-1}}>0.
  \end{equation*}
  By a direct computation and Lemma \ref{Lem-3.3}, \eqref{Equ-averaged} implies
  \begin{equation*}\left\{\begin{array}{ll}
\xi^{\prime \prime}(t)+b \xi^{\prime}(t)+(p-1) c_{0}^{p-1} \xi(t)=F(\xi, t) & \text { in } t \in(0,+\infty), \\
\xi(t) \rightarrow 0 & \text { as } t \rightarrow \infty, \\
\xi^{\prime}(t) \rightarrow 0 & \text { as } t \rightarrow \infty,
\end{array}\right.\end{equation*}
where
\begin{equation}\label{equ-temp-Wron}
|F(\xi, t)|\leq
C_1|\xi(t)|^2+C_2|\xi(t)|e^{-t}+C_3c_0^pe^{-t}
\end{equation}
for some constants $C_1,C_2,C_3>0$ for all $t>1$.
First we analyze its homogeneous part, i.e.,
\begin{equation}\label{equ-temp-homo}
\phi^{\prime \prime}(t)+b \phi^{\prime}(t)+c_{0}^{p-1}(p-1) \phi(t)=0,~t\in\mathbb R.
\end{equation}
Let $\lambda_{1},\lambda_2$ be the   roots of the characteristic equation $\lambda^{2}+b \lambda+(p-1) c_{0}^{p-1}=0$, i.e.,
\begin{equation*}\lambda_{1}:=\frac{-b-\sqrt{b^{2}-4(p-1) c_{0}^{p-1}}}{2},\quad
\lambda_2:=
\frac{-b+\sqrt{b^{2}-4(p-1) c_{0}^{p-1}}}{2}
.\end{equation*}
Thus \eqref{equ-temp-homo} admits two linearly independent solutions $\phi_{i},~i=1,2$, as the following
$$
\phi_1(t):=e^{\lambda_1t},~\phi_2t=\left\{
\begin{array}{llll}
  e^{\lambda_2t}, & \text{if }b^{2} \neq 4(p-1) c_{0}^{p-1},\\
  te^{-\frac{b}{2}t}, & \text{if }b^{2}= 4(p-1) c_{0}^{p-1}.\\
\end{array}
\right.
$$
Let $\mu_i:=\text{Re}(\lambda_i)$. Since $b>0,c_0>0$, we have $\mu_{1} \leq \mu_{2}<0$. Let
$$
\alpha_0:=\min\{-\mu_2,1\}>0.
$$

When $b^2\not= 4(p-1)c_0^{p-1}$,
by Wronskian formula, for fixed $T_0\gg 0$, there exist two constants $C_4,C_5$ such that
\begin{equation}\label{equ-Wronski}
\xi(t)=C_4e^{\lambda_1t}+C_5e^{\lambda_2t}-e^{\lambda_1t}\int_{T_0}^t
\dfrac{e^{\lambda_2\tau}}{W(\tau)}F(\xi(\tau),\tau)d\tau
+e^{\lambda_2t}\int_{T_0}^t
\dfrac{e^{\lambda_1\tau}}{W(\tau)}F(\xi(\tau),\tau)d\tau,
\end{equation}
where $W(\tau)$ is the Wronskian determinant
$$
W(\tau)=(\lambda_2-\lambda_1)e^{(\lambda_1+\lambda_2)\tau}=\sqrt{b^2-4(p-1)c_0^{p-1}}
e^{(\lambda_1+\lambda_2)\tau}.
$$
Then there exists $K>0$ such that $|W(\tau)e^{-(\lambda_1+\lambda_2)\tau}|>K$.
For any sufficiently small $\epsilon>0$, there exists $T_0\gg 1$ such that for all $t>T_0$,
$$
\dfrac{C_1}{K}|\xi(t)|<\dfrac{\epsilon}{4},~C_2e^{-t}<\dfrac{\epsilon}{4}.
$$
Thus
$$
|\xi(t)|\leq Ce^{\mu_1t}+Ce^{\mu_2t}+\dfrac{\epsilon}{4}e^{\mu_1t}
\int_{T_0}^te^{-\mu_1\tau}|\xi(\tau)|d\tau
+\dfrac{\epsilon}{4}e^{\mu_2t}
\int_{T_0}^te^{-\mu_2\tau}|\xi(\tau)|d\tau
+Ce^{-t}.
$$
Since $\mu_1\leq\mu_2$,  we have
$$
\begin{array}{lllllll}
e^{-\mu_2t}|\xi(t)| & \leq &
\displaystyle Ce^{(\mu_1-\mu_2)t}+C+Ce^{-(\mu_2+1)t}
+\dfrac{\epsilon}{4}e^{(\mu_1-\mu_2)t}\int_{T_0}^te^{-\mu_1\tau}|\xi(\tau)|d\tau
+\dfrac{\epsilon}{4}\int_{T_0}^te^{-\mu_2\tau}|\xi(\tau)|d\tau\\
&\leq & \displaystyle Ce^{(\mu_1-\mu_2)t}+C+Ce^{-(\mu_2+1)t}
+\dfrac{\epsilon}{4}\int_{T_0}^te^{(\mu_1-\mu_2)\tau}e^{-\mu_1\tau}|\xi(\tau)|d\tau
+\dfrac{\epsilon}{4}\int_{T_0}^te^{-\mu_2\tau}|\xi(\tau)|d\tau\\
&= &\displaystyle Ce^{(\mu_1-\mu_2)t}+C+Ce^{-(\mu_2+1)t}
+\dfrac{\epsilon}{2}\int_{T_0}^te^{-\mu_2\tau}|\xi(\tau)|d\tau.\\
\end{array}
$$
By the Gr\"onwall's inequality,
$$
 e^{-\mu_2t}|\xi(t)|\leq Ce^{(\mu_1-\mu_2)t}+C +Ce^{-(\mu_2+1)t}
 +\int_{T_0}^t\left(
 Ce^{(\mu_1-\mu_2)s}+C+Ce^{-(\mu_2+1)s}
 \right) \frac{\epsilon}{2}\exp\left(\int_s^t\frac{\epsilon}{2}d \tau\right)d s,
$$
thus
\begin{equation}\label{equ-estimate-rough}
|\xi(t)|\leq Ce^{-(\alpha_0-\frac{\epsilon}{2})t}=:Ce^{-\alpha t}
\end{equation}
for some constant $C>0$ for sufficiently large $t$.
Put \eqref{equ-estimate-rough} into \eqref{equ-temp-Wron} and \eqref{equ-Wronski} to obtain
$$
|F(\xi(t),t)|\leq Ce^{-2\alpha t}
+Ce^{-(\alpha+1)t}+Ce^{-t}\leq Ce^{-\min\{2\alpha,1\}t}
$$
 for all $t\geq T_0$ and
\begin{equation*}
|\xi(t)| \leq C e^{-\alpha_{0} t}+C e^{\mu_{1} t} \int_{T_0}^{t} e^{\left(-\mu_{1}-\min \{2 \alpha, 1\}\right) s} \mathrm{d} s+C e^{\mu_{2} t} \int_{T_0}^{t} e^{\left(-\mu_{2}-\min \{2 \alpha, 1\}\right) s} \mathrm{d} s.
\end{equation*}
Since $\epsilon$ can be taken arbitrarily small, we have $2\alpha>\alpha_0$ and thus
\begin{equation*}|\xi(t)| \leq\left\{\begin{array}{ll}
C t e^{-\alpha_{0} t}, & \text { if }\mu_{2}=-1, \\
C e^{-\alpha_{0} t}, & \text { otherwise. }
\end{array}\right.\end{equation*}

When $b^{2}=4(p-1) c_{0}^{p-1}$, then the Wronskian formula becomes
\begin{equation}\label{equ-Wronski-2}
\xi(t)=C_{4} e^{-\frac{b}{2} t}+C_{5} t \cdot e^{-\frac{b}{2} t}+e^{-\frac{b}{2} t} \int_{T_0}^{t} e^{\frac{b}{2} \tau} F(\xi(\tau),\tau) \mathrm{d} \tau-e^{-\frac{b}{2} t} \int_{T_0}^{t} \tau e^{\frac{b}{2} \tau} F(\xi(\tau),\tau) \mathrm{d} \tau.
\end{equation}
Similarly, we still have \eqref{equ-estimate-rough} by the Gr\"onwall's inequality since $t e^{-\frac{\epsilon}{4}t}\rightarrow 0$ as $t\rightarrow+\infty$. Put \eqref{equ-estimate-rough} into \eqref{equ-Wronski-2} again, we have
\begin{equation*}|\xi(t)| \leq C t \cdot e^{-\frac{b}{2} t}+C e^{-\frac{b}{2} t} \int_{T_0}^{t} \tau e^{\left(\frac{b}{2}-\min \{2 \alpha, 1\}\right) \tau} \mathrm{d} \tau.
\end{equation*}
Since $\epsilon$ can be taken arbitrarily small, we have
$$
|\xi(t)|\leq \left\{
\begin{array}{llll}
  Ct^2e^{-\alpha_0t}, & \text{if }b=2,\\
  Cte^{-\alpha_0t}, & \text{if }b\not=2.
\end{array}
\right.
$$
Combining these two cases \eqref{equ-case2-D1} follows immediately by the definition of $\xi(t)$ and \eqref{equ-AsympRadial}.
\end{proof}

\subsection{When $a\geq 0$}\label{Sec-3.3}

From Appendix, when $a\geq 0$, there is only one global non-negative solution $\psi\equiv 0$
of \eqref{Equ-averaged-homo}. In this subsection, we prove that for both $b=0$ and $b>0$, $\overline u(t)\rightarrow 0$ as $t\rightarrow+\infty$. The convergence speed follows similar to the proof of Theorem \ref{Thm-2}.\eqref{Case-2}.\eqref{equ-case2-D0} case.

When $b=0,a\geq 0$, we still have \eqref{Equ-vanish-D} as in $a<0$ case. As in the classification result on asymptotics of \eqref{Equ-averaged-homo}, we have $D_{\infty}=0$, otherwise  $\overline u(t)$ vanishes in finite time. Thus
$$
\lim_{t\rightarrow+\infty}\overline u'^2(t)+a\overline u^2(t)+\dfrac{2}{p+1}\overline u^{p+1}(t)=0,
$$
and hence $\overline u(t)\rightarrow 0$ as $t\rightarrow+\infty$.
When $b>0,a\geq 0$, we also have $\overline u(t)\rightarrow 0$ as $t\rightarrow+\infty$ by the proof of Lemma \ref{Lem-3.3}.

To obtain an estimate of decay speed, we analyze \eqref{Equ-averaged} by the roots of the characteristic equation of the homogeneous part, i.e.,
\begin{equation}\label{equ-homo}
\phi''(t)+b\phi'(t)+a\phi(t)=0,~-\infty<t<\infty.
\end{equation}
Let $\lambda_1,\lambda_2$ be the two roots of
$
\lambda^2+b\lambda+a=0
$, i.e.,
$$
\lambda_1:=\dfrac{-b-\sqrt{b^2-4a}}{2},\quad\lambda_2:=\dfrac{-b+\sqrt{b^2-4a}}{2}.
$$

When $b=0$, $\text{Re}\lambda_1=\text{Re}\lambda_2=0$.
When $b>0, a=0$, $\text{Re}\lambda_2=0$.
As stated in \cite{Coddington}, it remains a difficult question to estimate the vanishing speed when the eigenvalue have zero real part. Thus we merely obtain $\overline u(t)=o(1)$ in these cases. Especially for $a=0, b=n-2$ case, Aviles \cite{4} proved an optimal estimate $u(\theta,t)=t^{-\frac{1}{p-1}}O(1)$ as $t\rightarrow+\infty$.

When $b>0, b^2-4a>0$ and $a>0$, we have $\lambda_1<\lambda_2<0$. Thus by similar argument as in Theorem \ref{Thm-2}.\eqref{Case-2}.\eqref{equ-case2-D1}, $|\overline u(t)|=O(e^{\lambda_2t})$. When $b>0, b^2-4a=0$, we have $|\overline u(t)|=O(te^{\lambda_2t})$. When $b>0, b^2-4a<0$, we have $|\overline u(t)|=O(e^{-\frac{b}{2}t})$.  By transform \eqref{Equ-transform}, maximum principle as Lemma 2.1 of \cite{CCLin-Duke95} and $v(x)>0$ in $B_1\setminus\{0\}$, we have $u(\theta,t)>Ce^{-\frac{n+b-2}{2}t}$ for some $C>0$ and large $t$.

Combining all these case, \eqref{equ-case3-D} follows from \eqref{equ-AsympRadial} and the estimates above.

\section{Removability of singularity}\label{Sec-4}

In this section, we discuss further when $u(\theta,t)\rightarrow 0$ as in \eqref{Equ-case1-D0}, \eqref{equ-case2-D0} or \eqref{equ-case3-D}, whether the corresponding positive solution $v(x)$ of \eqref{Equ-Punctured} has a removable singularity at origin.
First, we consider $c=0$, in which case by standard bootstrap argument if $v\in C^0(B_1)$ then $v\in C^{\infty}(B_1)$,  see for instance \cite{CGS,CoupledSystem,GT,Marques-CVPDE,xiong-2020}.
\begin{proposition}
Let $v$ be a positive smooth solution of \eqref{Equ-Punctured} in $B_1\setminus\{0\}$ with
\begin{equation}\label{equ-cond-classical}
c=0,\quad\sigma\in[0,2)\quad\text{and}\quad
\dfrac{n-\sigma}{n-2}<p\leq\dfrac{n+2-2\sigma}{n-2}.
\end{equation}
Then either $v$ can be extended smoothly to $B_1$ or $v(x)\approx |x|^{-\frac{2-\sigma}{p-1}}$ as $x\rightarrow 0$.
\end{proposition}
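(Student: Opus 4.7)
The plan is to reduce to the cylinder via the Emden-Fowler transform \eqref{Equ-transform}, apply Theorem \ref{Thm-2}, and match its two alternatives in each applicable case with the two alternatives of the proposition. Under the hypothesis \eqref{equ-cond-classical}, solving $\sigma=2-\frac{1}{2}(p-1)(b+n-2)$ for $b$ gives a unique $b=\frac{2(2-\sigma)}{p-1}-(n-2)$, and $c=0$ then forces $a=\frac{b^2-(n-2)^2}{4}$. A direct calculation shows that the strict inequality $p>\frac{n-\sigma}{n-2}$ is equivalent to $b<n-2$, and hence to $a<0$; therefore we fall either into case \eqref{Case-1} of Theorem \ref{Thm-2} (when $b=0$, which forces $a=-\frac{(n-2)^2}{4}$) or into case \eqref{Case-2} (when $b>0$).

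The key computation is that $c=0$ yields $\sqrt{b^2-4a}=n-2$, so the decay exponents appearing in \eqref{Equ-case1-D0} and \eqref{equ-case2-D0} both reduce to $\frac{n+b-2}{2}$. Combined with $v(x)=e^{\frac{n+b-2}{2}t}u(\theta,t)$, these two alternatives force $v(x)=O(1)$ as $x\to 0$. Conversely, the complementary alternatives \eqref{Equ-case1-D1} and \eqref{equ-case2-D1} keep $u$ bounded from below by either the positive constant $(-a)^{\frac{1}{p-1}}$ or a strictly positive periodic $\psi$, which translates into $v(x)\asymp|x|^{-\frac{n+b-2}{2}}=|x|^{-\frac{2-\sigma}{p-1}}$; the matching upper bound is already part of Theorem \ref{Thm-1-Punctured}, while the matching lower bound follows from the strict positivity of the limiting cylindrical profile.

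It remains to upgrade $v\in L^\infty(B_1\setminus\{0\})$ to $v\in C^\infty(B_1)$ in the bounded branch. Since $\sigma<2$, whenever $v$ is bounded the nonlinearity $v^p/|x|^\sigma$ lies in $L^q_{loc}(B_1)$ for some $q>n/2$. Using cutoffs $\eta_\epsilon$ supported outside $B_\epsilon$ with $\int|\nabla\eta_\epsilon|^2\to 0$ and multiplying smooth test functions by such cutoffs, one verifies that $v\in H^1_{loc}(B_1)$ and that $v$ is a weak solution of \eqref{Equ-Punctured} on all of $B_1$: no Dirac-type contribution can appear at the origin because a single point has zero $H^1$-capacity. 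Standard $L^q$-theory followed by the usual Schauder bootstrap (as in \cite{GT,CGS}) then gives $v\in C^\infty(B_1)$. The main obstacle I anticipate lies in the bookkeeping that identifies the four alternatives of Theorem \ref{Thm-2} with the two alternatives of the proposition through the exponent computation above; the removability argument itself is routine once $v$ is known to be bounded.
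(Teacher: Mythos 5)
Your proposal is correct and follows essentially the same route as the paper: translate the condition \eqref{equ-cond-classical} into $b^2-4a=(n-2)^2$ and $0\le b<n-2$ (hence $a<0$), invoke Theorem \ref{Thm-2}, and observe that since $\sqrt{b^2-4a}=n-2$ the decay exponents in \eqref{Equ-case1-D0} and \eqref{equ-case2-D0} coincide with $\tfrac{n+b-2}{2}=\tfrac{2-\sigma}{p-1}$, so the decaying branch yields $v\asymp 1$ near the origin while the non-decaying branch yields $v\asymp|x|^{-\frac{2-\sigma}{p-1}}$. The paper handles the removability of the singularity for bounded $v$ by citing the standard bootstrap references, whereas you sketch the cutoff/zero-capacity argument explicitly; that is more detail but the same standard step, not a different method.
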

\begin{proof}
  By transform \eqref{Equ-transform}, \eqref{equ-cond-classical} gives us
  $$
  b^2-4a=(n-2)^2\quad\text{and}\quad0\leq b<n-2.
  $$
  Thus $a<0$. By Theorem \ref{Thm-2} and transform \eqref{Equ-transform}, when \eqref{Equ-case1-D1}  or \eqref{equ-case2-D1} happens, $v\approx |x|^{-\frac{2-\sigma}{p-2}}$ as $x\rightarrow 0$ and $v\not\in H^1_{loc}(B_1)$.

  When \eqref{Equ-case1-D0} or $\eqref{equ-case2-D0}$ happens, we have
  $$
  \dfrac{1}{C}\leq v(x)=|x|^{-\frac{2-\sigma}{p-1}}u(\theta,-\ln |x|)\leq C
  $$
  in a neighbourhood of origin. Thus in this case, $v$ can be extend to $B_1$ smoothly.
\end{proof}

\begin{proposition}
  Let $v$ be a positive smooth solution of \eqref{Equ-Punctured} in $B_1\setminus\{0\}$ with
  $$
0<c<\frac{(n-2)^2}{4},\quad \sigma\in[0,2)\quad\text{and}\quad
\frac{n+2+\sqrt{(n-2)^2-4c}-2\sigma}{n-2+\sqrt{(n-2)^2-4c}}<p\leq \frac{n+2-2\sigma}{n-2}.
$$
Then either  $v\in H^1_{loc}(B_1)$ but not bounded near origin or $v\approx |x|^{-\frac{2-\sigma}{p-1}}$ as $x\rightarrow 0$.
\end{proposition}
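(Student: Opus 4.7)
The plan is to invoke Theorem \ref{Thm-2} via the Emden--Fowler transformation \eqref{Equ-transform} and translate each of its alternatives into the corresponding statement for $v$. Set $\beta:=\sqrt{(n-2)^{2}-4c}=\sqrt{b^{2}-4a}$. The hypothesis $0<c<(n-2)^{2}/4$ is equivalent to $0<\beta<n-2$; using $\sigma=2-\tfrac{1}{2}(p-1)(b+n-2)$, a short computation shows that the lower bound $p>\frac{n+2+\beta-2\sigma}{n-2+\beta}$ collapses, after cancelling $p-1$, to $\beta>b$, i.e.\ $a<0$. Hence \eqref{Equ-cond-cylinder} holds with $a<0$ and we are in case \eqref{Case-1} (if $b=0$) or case \eqref{Case-2} (if $b>0$) of Theorem \ref{Thm-2}.

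In the non-decay alternative (\eqref{Equ-case1-D1} or \eqref{equ-case2-D1}), $u(\theta,t)$ is bounded above and bounded away from zero for large $t$---it approaches either a positive periodic function $\psi$ or the constant $(-a)^{1/(p-1)}$. Since $\tfrac{n+b-2}{2}=\tfrac{2-\sigma}{p-1}$, the identity $v(x)=|x|^{-(n+b-2)/2}u(\theta,-\log|x|)$ then yields $v(x)\approx|x|^{-(2-\sigma)/(p-1)}$ as $x\to 0$, which is the second alternative of the proposition.

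In the decay alternative (\eqref{Equ-case1-D0} or \eqref{equ-case2-D0}), the unified rate is $u(\theta,t)\approx e^{-(b+\beta)t/2}$ (with $b=0$ this reads $e^{-\sqrt{-a}\,t}$). Under \eqref{Equ-transform} this becomes $v(x)\approx|x|^{\alpha_{+}}$, where $\alpha_{+}:=\tfrac{-(n-2)+\beta}{2}$ is the less singular root of the indicial polynomial $\alpha^{2}+(n-2)\alpha+c=0$ associated to the Hardy operator $\Delta+c/|x|^{2}$. Because $0<\beta<n-2$, we have $-\tfrac{n-2}{2}<\alpha_{+}<0$, so $v$ is unbounded near the origin, giving the first half of the first alternative.

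To finish we must verify $v\in H^{1}_{loc}(B_{1})$ in the decay case. A direct computation in $(r,\theta)$-coordinates gives
\begin{equation*}
|\nabla v(x)|^{2}=|x|^{-(n+b)}\Bigl(\bigl(\tfrac{n+b-2}{2}u+\partial_{t}u\bigr)^{2}+|\nabla_{\theta}u|^{2}\Bigr),
\end{equation*}
and converting to cylinder coordinates,
\begin{equation*}
\int_{B_{\rho}}|\nabla v|^{2}\,dx=\int_{-\log\rho}^{\infty}\int_{\mathbb{S}^{n-1}}e^{bt}\Bigl(\bigl|\tfrac{n+b-2}{2}u+\partial_{t}u\bigr|^{2}+|\nabla_{\theta}u|^{2}\Bigr)\,d\theta\,dt.
\end{equation*}
The main obstacle is that Theorem \ref{Thm-1} only provides boundedness of $\partial_{t}u$ and $\nabla_{\theta}u$; we must upgrade this to the matching decay $|\partial_{t}u|+|\nabla_{\theta}u|=O(e^{-(b+\beta)t/2})$. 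This is achieved by applying interior $L^{q}$-estimates to the equation for $u-\overline{u}$ on slabs $\mathbb{S}^{n-1}\times[t-1,t+1]$, in the spirit of Lemma \ref{Lem-DerivativeEstimates} (the argument adapts without change to $b>0$), and then combining with the ODE/Wronskian control on $\overline{u}'$ that is already built into the proofs of Theorem \ref{Thm-2}, cases \eqref{Case-1} and \eqref{Case-2}. Once these refined bounds are in hand, the integrand above is $O(e^{-\beta t})$, which is integrable since $\beta>0$, and $v\in H^{1}_{loc}(B_{1})$ follows.
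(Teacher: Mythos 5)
Your reduction to Theorem \ref{Thm-2} via the Emden--Fowler transformation, including the verification that the lower bound on $p$ is equivalent to $a<0$, follows the paper exactly and is correct (you also correctly read the decay alternative as \eqref{Equ-case1-D0}/\eqref{equ-case2-D0}; the paper's proof has an apparent typo writing \eqref{equ-case2-D1} where \eqref{equ-case2-D0} is meant). The translation of the non-decay alternative into $v\approx|x|^{-(2-\sigma)/(p-1)}$ and of the decay alternative into $v\approx|x|^{\alpha_+}$ with $\alpha_+=\frac{-(n-2)+\beta}{2}\in(-\frac{n-2}{2},0)$, hence unboundedness, is also exactly what the paper does.

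Where you diverge is the proof that $v\in H^1_{loc}(B_1)$ in the decay case, and here your argument has a real gap. The paper avoids gradient control entirely: from $v\approx|x|^{\alpha_+}$ with $\alpha_+>-\frac{n-2}{2}$ it follows that $v\in L^{p_0}_{loc}(B_1)$ for some $p_0>\frac{2n}{n-2}$, hence (by H\"older) the right-hand side $\frac{c}{|x|^2}v+\frac{v^p}{|x|^\sigma}$ lies in $L^{q}_{loc}$ for some $q>\frac{2n}{n+2}$, and Calder\'on--Zygmund estimates then give $v\in W^{2,q}_{loc}\subset H^1_{loc}$. This uses only the pointwise two-sided bound already established. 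Your alternative is to compute the Dirichlet energy directly in cylinder coordinates, which forces you to establish the matching decay $|\partial_t u|+|\nabla_\theta u|=O(e^{-(b+\beta)t/2})$. You acknowledge that Theorem \ref{Thm-1} only gives $O(1)$ for these quantities, and you propose to upgrade this by a combination of (a) a $b>0$ analogue of Lemma \ref{Lem-DerivativeEstimates}, which the paper states and proves only for $b=0$, and (b) extraction of $\overline u'=O(e^{\lambda_1 t})$ from the Wronskian identity, which the proof of \eqref{equ-case2-D0} does not record explicitly. Neither step is carried out; in particular you assert that Lemma \ref{Lem-DerivativeEstimates} ``adapts without change,'' but this is precisely the kind of claim that needs verification since the lemma's proof begins by using the $b=0$ form of \eqref{Equ-averaged} to control $\overline u''$. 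The plan is plausible and would likely succeed, but as written it replaces a short regularity argument by a longer gradient-decay argument whose key estimates are only sketched. If you want to keep your direct-energy approach, you must actually prove the decay of $\partial_t u$, $\nabla_\theta u$; otherwise, the paper's integrability-bootstrap route is both shorter and self-contained given what has already been established.
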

\begin{proof}
  By transform \eqref{Equ-transform}, we have \eqref{Equ-cond-cylinder} with $a<0$ and
  $$
  0<b^2-4a<(n-2)^2.
  $$
  By Theorem \ref{Thm-2}, either
  $$
  \liminf_{t\rightarrow+\infty}u(\theta,t)>0
  $$
  or \eqref{Equ-case1-D0} (when $p=\frac{n+2-2\sigma}{n-2}$) or \eqref{equ-case2-D1} (when $p<\frac{n+2-2\sigma}{n-2}$) holds.

  For the first case, we have $v\approx |x|^{-\frac{2-\sigma}{p-1}}$ as $x\rightarrow 0$. For the later two cases,
  $$
  \dfrac{1}{C}|x|^{\frac{\sqrt{b^2-4a}}{2}-\frac{n-2}{2}} \leq
  v\leq C|x|^{\frac{\sqrt{b^2-4a}}{2}-\frac{n-2}{2}},\quad\forall~x\in B_{\frac{1}{2}}\setminus\{0\},$$
  for some $C>0$.
Thus $v$ is not bounded near origin and $v\in L^{p_0}_{loc}(B_1)$ for some $p_0>\frac{2n}{n-2}$. By the H\"older inequality and $L^p$-estimates, we have
$\frac{c}{|x|^2}v+\frac{v^p}{|x|^{\sigma}}\in L^{q}_{loc}(B_1)$ for some $q>\frac{2n}{n+2}$ and hence $v\in H^1_{loc}(B_1)$.
\end{proof}

\begin{remark}
  Although when \eqref{Equ-case1-D0} or \eqref{equ-case2-D0} holds, $0$ is an $H^1$-removable singularity of $v$, but it is not a removable singularity. From the proof of Theorem \ref{Thm-2}, the estimates in \eqref{Equ-case1-D0} and \eqref{equ-case2-D0} are optimal.
\end{remark}

When \eqref{equ-case3-D} holds with $a,b>0$, from the proof in Theorem \ref{Thm-1}.\eqref{Case-3}, we have
$$
v(x)\leq C\left\{
\begin{array}{lll}
  |x|^{\frac{b-\sqrt{b^2-4a}}{2}-\frac{2-\sigma}{p-1}}, & \text{if }b^2-4a>0,\\
  |x|^{\frac{b}{2}-\frac{2-\sigma}{p-1}}\ln |x|, & \text{if }b^2-4a=0,\\
  |x|^{\frac{b}{2}-\frac{2-\sigma}{p-1}}, & \text{if }b^2-4a<0.\\
\end{array}
\right.
$$
Since $c\geq 0$,
$$
\frac{b-\sqrt{b^{2}-4 a}}{2}-\frac{2-\sigma}{p-1}=-\dfrac{\sqrt{(n-2)^2-4c}}{2}-\frac{n-2}{2}\leq -\frac{n-2}{2},
$$
$\frac{b}{2}-\frac{2-\sigma}{p-1}=-\frac{n-2}{2}$. Thus $v$ may not belong to $H_{loc}^1(B_1)$ in this case.

\appendix

\section{Appendix: Classification of Positive Radially Symmetric Solutions}

In the appendix, we classify the positive solutions $\psi$ of \eqref{Equ-averaged-homo} as the following.
\begin{enumerate}[(i)]
  \item When $b=0, a<0$, $\psi$ either vanishes at infinity or is constant $(-a)^{\frac{1}{p-1}}$ or is a non-constant periodic function;
  \item When $b>0,a<0$, $\psi$ must converge to $(-a)^{\frac{1}{p-1}}$ as $t\rightarrow+\infty$;
  \item When $a\geq 0$, $\psi$ doesn't exists.
\end{enumerate}

$\bullet$
When $b=0,a<0$, equation \eqref{Equ-averaged-homo} reads
\begin{equation*}\psi''+a \psi+\psi^{p}=0, \quad-\infty<t<\infty,
\end{equation*}
with $1<p\leq\frac{n+2}{n-2}$. Thus
\begin{equation*}
\frac{\mathrm{d}}{\mathrm{d} t} H\left(\psi'(t), \psi(t)\right)=0,\quad\text{i.e.,}~
H\left(\psi'(t), \psi(t)\right)=h_{0}
\end{equation*}
for some constant $h_0$,
where $H$ is defined as in \eqref{Equ-def-H}.

Since
\begin{equation*}\begin{aligned}
\inf _{\mathbb{R} \times \mathbb{R}^{+}} H(\alpha, \beta) &=\inf _{\mathbb{R}^{+}}\left(a \cdot \beta^{2}+\frac{2}{p+1} \beta^{p+1}\right) \\
&=-\frac{p-1}{p+1}(-a)^{\frac{p+1}{p-1}},
\end{aligned}\end{equation*}
we have $h_0\geq -\frac{p-1}{p+1}(-a)^{\frac{p+1}{p-1}}$. As shown in Figure \ref{Pic-1} below,  solutions of \eqref{Equ-averaged-homo} with $b=0,a<0$ are classified by $h_0$ into the following cases.
\begin{figure}[htbp]
  \centering
  \includegraphics[width=0.5\linewidth]{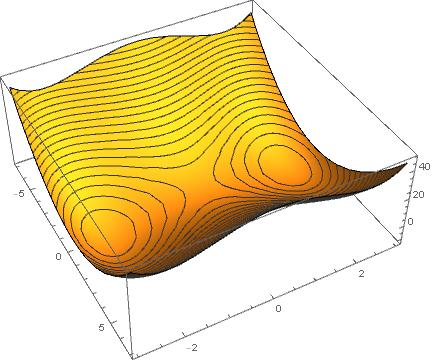}
  \caption{Picture of $H(\psi',\psi)$ with $a<0$.}\label{Pic-1}
\end{figure}
\begin{enumerate}
  \item $h_{0}=-\frac{p-1}{p+1}(-a)^{\frac{p+1}{p-1}}$. Then
  \begin{equation*}
   \psi(t) \equiv(-a)^{\frac{1}{p-1}},\quad\forall~t \in(-\infty, \infty).
  \end{equation*}
  \item $h_0=0$. Then by a direct computation, for some $\lambda>0$,
  \begin{equation*} \psi(t)=\left(\frac{\left(e^{-(p-1) \sqrt{-a} t}+\lambda^{2}\right) e^{\frac{p-1}{2} \sqrt{-a} t}}{\sqrt{-2 a(p+1) \lambda}}\right)^{-\frac{2}{p-1}}
  =O(1)\cdot e^{-\sqrt{-a}t}.
  \end{equation*}
  \item $h_0\in (-\frac{p-1}{p+1}(-a)^{\frac{p+1}{p-1}},0)$. Then we have a class of periodic solutions which are uniquely determined by $h_{0}.$ See more analysis of $\psi$ in Section 2.1 in \cite{KMPS}.
  \item $h_0>0$. Then the solution must become negative in finite time.
   Thus there are no positive solutions $\psi$ of \eqref{Equ-averaged-homo} with $b=0$ and $a<0$ such that $H(\psi',\psi)>0$.
\end{enumerate}

$\bullet$
When $b>0,a<0$, by  a direct computation,
\begin{equation}\label{equ-temp-12}
\frac{\mathrm{d}}{\mathrm{d} t} H\left(\psi'(t), \psi(t)\right)=-2 b \psi'^{2}(t) \leq 0.
\end{equation}
By a direct computation, \eqref{Equ-averaged-homo} with $b>0,a<0$ admits two non-negative constant solutions
$$
\psi\equiv 0\quad\text{and}\quad\psi\equiv (-a)^{1 /(p-1)}.
$$
Since $ H\left(\psi'(t), \psi(t)\right)$ is monotone decreasing and bounded from below, we let
$$
h_{-0}:=\lim_{t\rightarrow-\infty}H(\psi'(t),\psi(t))\quad\text{and}\quad
h_0:=\lim_{t\rightarrow+\infty}H(\psi'(t),\psi(t)),
$$
where $h_{-0}$ may be $+\infty$. We separate into the following cases by $h_{-0}$ and $h_0$.
\begin{enumerate}
  \item $h_{-0}>0$. Then there exists $N$ such that $H\left(\psi'(t), \psi(t)\right)>0$ for all $t \in(-\infty, N)$ and $\psi$ vanishes in finite time.

  In fact, if $0<h_{-0}<\infty$, then as shown in Figure \ref{Pic-1}, $\psi(t)$ is asymptotically periodic and becomes negative in finite time.
  It remains to rule out $h_{-0}=+\infty$ and $\psi$ always positive case.

  First we claim that for all $t<N$, $\psi'(t)<0$ always holds.
  By contradiction, we assume that for some $t_0<N$, $\psi'(t_0)=0, \psi(t_0)>0$ and $H(0,\psi(t_0))>0$. By \eqref{Equ-averaged-homo}, we have $\psi''(t_0)<0$ and thus $\psi'(t)<0$ in a left-neighbourhood $(t_0-2\epsilon,t_0)$ of $t_0$ for some $\epsilon>0$.
   As shown in Figure \ref{Pic-1}, $|\psi'|$ has a positive lower bound on $(-\infty,t_0-\epsilon]$, contradicting to $\psi>0$.

   However, since $\psi'(t)<0$ for all $t<N$, by \eqref{Equ-averaged-homo} again,
   there exists $N'\ll N$ such that $\psi''(t)<-1$ for all $t\in(-\infty,N']$. Thus this makes $\psi'(t)<0$ on $(-\infty,N)$ cannot happen.
   \item $h_{-0}=0, h_0=0$. Then $\psi\equiv 0$.
   \item $h_{-0}\leq 0, h_0<0$. Then
    $\psi(t)$ is asymptotically periodic to a positive periodic solution of \eqref{Equ-averaged-homo} with $b=0$. Thus by \eqref{Equ-averaged-homo}, $\psi''(t)$ is bounded. On the other hand,
   by \eqref{equ-temp-12}, $2b\psi'^2(t)$ is integrable on $[1,+\infty)$. Thus (see for instance the proof of \eqref{equ-limit-1der})
   $$
   \lim_{t\rightarrow+\infty}\psi'(t)=0\quad \text{and}\quad
   \lim_{t\rightarrow+\infty}\psi(t)=\psi(+\infty)
   $$
   for some constant $\psi(+\infty)$.
 By \eqref{Equ-averaged-homo} again, $\psi''(t)\rightarrow0$ as $t\rightarrow+\infty$ and thus $\psi(+\infty)$ is a constant solution of \eqref{Equ-averaged-homo}. Since $h_0<0$, the only possible case is
 $$
 \lim_{t\rightarrow+\infty}\psi(t)=
 (-a)^{1 /(p-1)}.
 $$
\end{enumerate}

$\bullet$
When $a\geq 0$, notice that
$$
\inf_{\mathbb R\times\mathbb R^+}H(\alpha,\beta)=\int_{\mathbb R}\left(a\cdot\beta^2+\frac{2}{p+1}\beta^{p+1}\right)=0.
$$
As shown in Figure \ref{Pic-2} and the calculus above,  the only possible case when $b=0$ is that $\psi(t)\equiv 0$, otherwise the solution would become negative in finite time. When $b>0$, similar to the discussion in $b>0,a<0$ and $h_{-0}>0$ case, $\psi$ also vanishes in finite time. Thus there are no positive smooth solutions when $a\geq 0$.

\begin{figure}[htbp]
  \centering
  \includegraphics[width=0.4\linewidth]{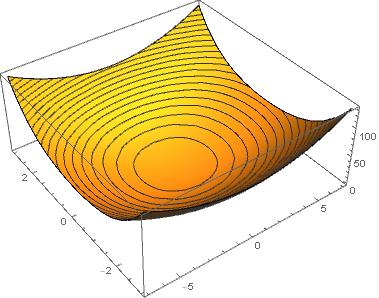}
  \caption{Picture of $H(\psi',\psi)$ with $a\geq 0$.}\label{Pic-2}
\end{figure}

Especially, Jin-Li-Xu \cite{15} proved the following radial symmetry of global solutions.
\begin{theorem}\label{Thm-symme-global}
 Let $f(x, t): \mathbb{R}^{n} \backslash\{0\} \times[0, \infty) \rightarrow[0, \infty)$ be a continuous
function such that for any $x \neq 0,0<\lambda<|x|,|z|>\lambda$ and $a \leq b,$ there holds
\begin{equation}\label{Equ-cond-symme}
\left(\frac{\lambda}{|z|}\right)^{n+2} f\left(x+\frac{\lambda^{2} z}{|z|^{2}},\left(\frac{|z|}{\lambda}\right)^{n-2} a\right)\leq f(x+z, b).
\end{equation}
If $v\in C^{2}\left(\mathbb{R}^{n} \backslash\{0\}\right)$ is a positive solution of
\begin{equation*}\Delta v+f(x, v)=0
\quad\text{in}~\mathbb R^n\setminus\{0\},
\end{equation*}
with $\limsup_{x\rightarrow 0}v(x)=+\infty$.
Then $v$ must be radially symmetric about the origin and $v^{\prime}(r)<0$ for all $0<r<\infty$.
\end{theorem}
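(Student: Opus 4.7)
The plan is to apply the method of moving spheres developed in Propositions~\ref{Prop-2.5} and~\ref{prop-2.7}. For each fixed $x_0 \neq 0$ and $\lambda \in (0, |x_0|)$, consider the Kelvin transform $v_{x_0,\lambda}$ and the exterior region $\Omega_{x_0,\lambda} = \{y \neq 0 : |y-x_0| > \lambda\}$. The ultimate goal is to prove that
\begin{equation*}
v_{x_0,\lambda}(y) \leq v(y), \qquad \forall\, x_0 \neq 0,\ 0 < \lambda < |x_0|,\ y \in \Omega_{x_0,\lambda}.
\end{equation*}
The hypothesis \eqref{Equ-cond-symme}, applied with $a = v_{x_0,\lambda}(y)$ and $b = v(y)$ (noting that $(|y-x_0|/\lambda)^{n-2} a = v(T_{x_0,\lambda}(y))$, where $T_{x_0,\lambda}$ is the Kelvin involution about $\partial B_\lambda(x_0)$), supplies the key differential inequality $-\Delta(v - v_{x_0,\lambda}) \geq 0$ on the set where $v \geq v_{x_0,\lambda}$, setting up the standard maximum principle machinery.

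To start the procedure, for each $x_0 \neq 0$ one uses smoothness and positivity of $v$ at $x_0$ to obtain $\frac{d}{dr}[r^{(n-2)/2} v(x_0 + r\theta)] > 0$ for small $r > 0$, which yields $v_{x_0,\lambda} \leq v$ on a small shell $\lambda < |y-x_0| \leq r_{x_0}$. Crucially, what replaces the boundary condition $v \geq 1/M$ on $\partial B_1$ used in earlier sections of the paper is the hypothesis $\limsup_{y \to 0} v(y) = +\infty$: since $v_{x_0,\lambda}(0) = (\lambda/|x_0|)^{n-2} v(x_0(1-\lambda^2/|x_0|^2))$ is finite, the inequality $v_{x_0,\lambda} < v$ holds automatically in a punctured neighborhood of $0$. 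A harmonic-barrier comparison on the intermediate annulus, together with the decay of $v_{x_0,\lambda}$ at infinity, closes the starting step for all sufficiently small $\lambda$.

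The main step is the continuation. Define
\begin{equation*}
\bar\lambda(x_0) := \sup\{\mu \in (0, |x_0|] : v_{x_0,\lambda} \leq v \text{ in } \Omega_{x_0,\lambda} \text{ for all } 0 < \lambda < \mu\},
\end{equation*}
and show $\bar\lambda(x_0) = |x_0|$ by contradiction. If $\bar\lambda(x_0) < |x_0|$, the strong maximum principle gives strict positivity of $w = v - v_{x_0,\bar\lambda}$ in the interior of $\Omega_{x_0,\bar\lambda}$, and the Hopf lemma gives strict positivity of its exterior normal derivative on $\partial B_{\bar\lambda}(x_0)$. The standard perturbation argument used in \cite{15,xiong-2020} then produces $\epsilon > 0$ with $v_{x_0, \bar\lambda + \epsilon} \leq v$, contradicting maximality. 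The main obstacle---and the reason the blow-up hypothesis is essential---is verifying the boundary comparisons near the two singular points throughout the perturbation: the pole $y = x_0$ of $v_{x_0,\lambda}$ (controlled by its explicit blow-up rate) and the singularity $y = 0$ of $v$ (controlled uniformly in small perturbations of $\lambda$ precisely because $v$ blows up at $0$).

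Finally, with the global comparison in hand for every $x_0 \neq 0$, I would invoke the calculus lemma of Li--Zhang (Lemma~11.2 of \cite{Li-Zhang-Harnack}, as cited in the proof of Proposition~\ref{Prop-2.5}) to deduce that $v$ depends only on $|y|$; the strict monotonicity $v'(r) < 0$ for all $r > 0$ then follows from the strict inequality in the moving sphere comparison at the limiting scale $\lambda \to |x_0|$.
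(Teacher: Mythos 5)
This result is stated in the appendix as Theorem~\ref{Thm-symme-global} and attributed to Jin--Li--Xu~\cite{15}; the paper does not supply its own proof, so your sketch is being measured against the standard moving--spheres argument from that reference rather than against text in this paper. At the structural level you have the right mechanism: the hypothesis \eqref{Equ-cond-symme} applied with $a=v_{x_0,\lambda}(y)$, $b=v(y)$ is exactly what makes $\Delta(v-v_{x_0,\lambda})\le 0$ on $\{v\ge v_{x_0,\lambda}\}$, and ``start then continue'' is the right scheme.

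However, you have misplaced the role of the blow-up hypothesis, and this is a genuine gap. You assert twice that because $\limsup_{y\to 0}v(y)=+\infty$ and $v_{x_0,\lambda}$ is regular at $0$, ``the inequality $v_{x_0,\lambda}<v$ holds automatically in a punctured neighborhood of $0$,'' and that the singularity at $0$ is ``controlled uniformly \ldots precisely because $v$ blows up at $0$.'' But $\limsup=+\infty$ gives blow-up only along some sequence; it does not give a pointwise lower bound for $v$ near $0$, so this step does not follow as stated. The correct way to handle the inner boundary is to observe that $\{0\}$ has zero Newtonian capacity and $v-\phi$ (resp.\ $v-v_{x_0,\lambda}$) is superharmonic where needed and \emph{bounded below} near $0$ (because $v>0$ and the competitor is regular there); one then applies the extended minimum principle for superharmonic functions, adding $\epsilon|y|^{2-n}$ and letting $\epsilon\to0$. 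No pointwise blow-up of $v$ is required for this. Where the blow-up hypothesis actually enters is in the dichotomy at the critical radius: the strong maximum principle gives \emph{either} $w=v-v_{x_0,\bar\lambda}>0$ in the interior \emph{or} $w\equiv 0$. Your sketch only treats the first branch. The second branch is ruled out precisely by $\limsup_{y\to 0}v(y)=+\infty$, because $v\equiv v_{x_0,\bar\lambda}$ would force $v$ to extend regularly across $0$ (as $v_{x_0,\bar\lambda}$ is smooth there). This is the essential use of the hypothesis, and it is missing from your argument.

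A smaller point: the calculus lemma you invoke at the end, Lemma~11.2 of~\cite{Li-Zhang-Harnack}, concludes that a function is \emph{constant} from the moving-spheres inequality holding for \emph{all} centers and \emph{all} radii; that is the form used in Proposition~\ref{Prop-2.5}. Here you only obtain the comparison for $x_0\ne 0$ and $0<\lambda<|x_0|$, and the desired conclusion is radial symmetry about the origin together with $v'(r)<0$, not constancy. The appropriate tool is the radial-symmetry version of the calculus lemma (this is Lemma~2.1 of~\cite{15}), or equivalently the passage to moving planes by sending $|x_0|\to\infty$ along a fixed direction with $\lambda\to|x_0|$, which yields reflection monotonicity across every hyperplane through $0$ from both sides and hence radial symmetry; the strict monotonicity then follows from Hopf's lemma at the first touching radius.
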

By a direct computation, under assumption \eqref{Equ-cond-pucntured}, $f(x,t)=\dfrac{c}{|x|^2}t+\dfrac{t^p}{|x|^{\sigma}}$ satisfies condition \eqref{Equ-cond-symme}. Thus all positive smooth solutions of \eqref{Equ-Punctured} with \eqref{Equ-cond-pucntured} in $\mathbb R^n\setminus\{0\}$ must be radially symmetric if $0$ is not a $C^0$-removable singularity.
By transform \eqref{Equ-transform}, so are all positive smooth solutions of \eqref{Equ-Cylinder} with $$\limsup_{t\rightarrow+\infty}e^{\frac{n+b-2}{2}t}u(\theta,t)>0.$$

{
\section*{\small Acknowledgement}\small
 The authors would like to thank Professor Jingang Xiong for his guidance and support in this work.
}
\small

\bibliographystyle{plain}

\bibliography{Cylinder}

\begin{thebibliography}{10}

\bibitem{Aubin-Riemannian}
Thierry Aubin.
\newblock {\em Some nonlinear problems in {R}iemannian geometry}.
\newblock Springer Monographs in Mathematics. Springer-Verlag, Berlin, 1998.

\bibitem{3}
Patricio Aviles.
\newblock On isolated singularities in some nonlinear partial differential
  equations.
\newblock {\em Indiana Univ. Math. J.}, 32(5):773--791, 1983.

\bibitem{4}
Patricio Aviles.
\newblock Local behavior of solutions of some elliptic equations.
\newblock {\em Comm. Math. Phys.}, 108(2):177--192, 1987.

\bibitem{Veron-asymptoticbehavior}
Marie-Fran\c{c}oise Bidaut-Veron and Philippe Grillot.
\newblock Asymptotic behaviour of the solutions of sublinear elliptic equations
  with a potential.
\newblock {\em Appl. Anal.}, 70(3-4):233--258, 1999.

\bibitem{5}
Marie-Fran\c{c}oise Bidaut-V\'{e}ron and Laurent V\'{e}ron.
\newblock Nonlinear elliptic equations on compact {R}iemannian manifolds and
  asymptotics of {E}mden equations.
\newblock {\em Invent. Math.}, 106(3):489--539, 1991.

\bibitem{Brezis-1980}
Ha\"{\i}m Br\'{e}zis and Laurent V\'{e}ron.
\newblock Removable singularities for some nonlinear elliptic equations.
\newblock {\em Arch. Rational Mech. Anal.}, 75(1):1--6, 1980/81.

\bibitem{CGS}
Luis~A. Caffarelli, Basilis Gidas, and Joel Spruck.
\newblock Asymptotic symmetry and local behavior of semilinear elliptic
  equations with critical {S}obolev growth.
\newblock {\em Comm. Pure Appl. Math.}, 42(3):271--297, 1989.

\bibitem{CoupledSystem}
Rayssa Caju, Jo\~{a}o~Marcos do~\'{O}, and Almir~Silva Santos.
\newblock Qualitative properties of positive singular solutions to nonlinear
  elliptic systems with critical exponent.
\newblock {\em Ann. Inst. H. Poincar\'{e} Anal. Non Lin\'{e}aire},
  36(6):1575--1601, 2019.

\bibitem{Book-n-equal-3}
Subrahmanyan Chandrasekhar.
\newblock {\em An introduction to the study of stellar structure}.
\newblock Dover Publications, Inc., New York, N. Y., 1957.

\bibitem{CCLin-Duke95}
Chiun-Chuan Chen and Chang-Shou Lin.
\newblock Local behavior of singular positive solutions of semilinear elliptic
  equations with {S}obolev exponent.
\newblock {\em Duke Math. J.}, 78(2):315--334, 1995.

\bibitem{CCLin-CPAM97}
Chiun-Chuan Chen and Chang-Shou Lin.
\newblock Estimates of the conformal scalar curvature equation via the method
  of moving planes.
\newblock {\em Comm. Pure Appl. Math.}, 50(10):971--1017, 1997.

\bibitem{Chen-Feng-2018pub}
Huyuan Chen and Feng Zhou.
\newblock Isolated singularities for elliptic equations with {H}ardy operator
  and source nonlinearity.
\newblock {\em Discrete Contin. Dyn. Syst.}, 38(6):2945--2964, 2018.

\bibitem{Classification-FLor}
Florica~C. C\^{\i}rstea.
\newblock A complete classification of the isolated singularities for nonlinear
  elliptic equations with inverse square potentials.
\newblock {\em Mem. Amer. Math. Soc.}, 227(1068):vi+85, 2014.

\bibitem{2020sharp}
Florica~C. C\^{\i}rstea and Maria F\u{a}rc\u{a}\c{s}eanu.
\newblock Sharp existence and classification results for nonlinear elliptic
  equations in $r^n\setminus\{0\}$ with {H}ardy potential.
\newblock {\em arXiv. 2009.00157}, 2020.

\bibitem{Coddington}
Earl~A. Coddington and Norman Levinson.
\newblock {\em Theory of ordinary differential equations}.
\newblock McGraw-Hill Book Company, Inc., New York-Toronto-London, 1955.

\bibitem{11}
Jean Dolbeault, Maria~J. Esteban, and Michael Loss.
\newblock Rigidity versus symmetry breaking via nonlinear flows on cylinders
  and {E}uclidean spaces.
\newblock {\em Invent. Math.}, 206(2):397--440, 2016.

\bibitem{GS}
Basilis Gidas and Joel Spruck.
\newblock Global and local behavior of positive solutions of nonlinear elliptic
  equations.
\newblock {\em Comm. Pure Appl. Math.}, 34(4):525--598, 1981.

\bibitem{GT}
David Gilbarg and Neil~S. Trudinger.
\newblock {\em Elliptic partial differential equations of second order}.
\newblock Classics in Mathematics. Springer-Verlag, Berlin, 2001.
\newblock Reprint of the 1998 edition.

\bibitem{han2019asymptotic}
Qing Han, Xiaoxiao Li, and Yichao Li.
\newblock Asymptotic expansions of solutions of the {Y}amabe equation and the
  $\sigma_k$-{Y}amabe equation near isolated singular points.
\newblock {\em arXiv. 1909.07466}, 2019.

\bibitem{Han-Li-T-Simgak}
Zheng-Chao Han, YanYan Li, and Eduardo~V. Teixeira.
\newblock Asymptotic behavior of solutions to the {$\sigma_k$}-{Y}amabe
  equation near isolated singularities.
\newblock {\em Invent. Math.}, 182(3):635--684, 2010.

\bibitem{15}
Qinian Jin, YanYan Li, and Haoyuan Xu.
\newblock Symmetry and asymmetry: the method of moving spheres.
\newblock {\em Adv. Differential Equations}, 13(7-8):601--640, 2008.

\bibitem{KMPS}
Nick Korevaar, Rafe Mazzeo, Frank Pacard, and Richard Schoen.
\newblock Refined asymptotics for constant scalar curvature metrics with
  isolated singularities.
\newblock {\em Invent. Math.}, 135(2):233--272, 1999.

\bibitem{18}
Congming Li.
\newblock Local asymptotic symmetry of singular solutions to nonlinear elliptic
  equations.
\newblock {\em Invent. Math.}, 123(2):221--231, 1996.

\bibitem{19}
YanYan Li.
\newblock Conformally invariant fully nonlinear elliptic equations and isolated
  singularities.
\newblock {\em J. Funct. Anal.}, 233(2):380--425, 2006.

\bibitem{Li-Zhang-Harnack}
YanYan Li and Lei Zhang.
\newblock Liouville-type theorems and {H}arnack-type inequalities for
  semilinear elliptic equations.
\newblock {\em J. Anal. Math.}, 90:27--87, 2003.

\bibitem{Li-FracHardy}
Yimei Li and Jiguang Bao.
\newblock Local behavior of solutions to fractional {H}ardy-{H}\'{e}non
  equations with isolated singularity.
\newblock {\em Ann. Mat. Pura Appl. (4)}, 198(1):41--59, 2019.

\bibitem{21}
Pierre-Louis Lions.
\newblock Isolated singularities in semilinear problems.
\newblock {\em J. Differential Equations}, 38(3):441--450, 1980.

\bibitem{Marques-CVPDE}
Fernando~C. Marques.
\newblock Isolated singularities of solutions to the {Y}amabe equation.
\newblock {\em Calc. Var. Partial Differential Equations}, 32(3):349--371,
  2008.

\bibitem{23}
Quoc~Hung Phan and Philippe Souplet.
\newblock Liouville-type theorems and bounds of solutions of
  {H}ardy-{H}\'{e}non equations.
\newblock {\em J. Differential Equations}, 252(3):2544--2562, 2012.

\bibitem{24}
Peter Pol\'{a}\v{c}ik, Pavol Quittner, and Philippe Souplet.
\newblock Singularity and decay estimates in superlinear problems via
  {L}iouville-type theorems. {I}. {E}lliptic equations and systems.
\newblock {\em Duke Math. J.}, 139(3):555--579, 2007.

\bibitem{27}
Leon Simon.
\newblock Asymptotics for a class of nonlinear evolution equations, with
  applications to geometric problems.
\newblock {\em Ann. of Math. (2)}, 118(3):525--571, 1983.

\bibitem{Veron-singularities-new}
Laurent V\'{e}ron.
\newblock {\em Singularities of solutions of second order quasilinear
  equations}, volume 353 of {\em Pitman Research Notes in Mathematics Series}.
\newblock Longman, Harlow, 1996.

\bibitem{xiong-2020}
Jingang Xiong and Lei Zhang.
\newblock Isolated singularities of solutions to the {Y}amabe equation in
  dimension $6$.
\newblock {\em arXiv. 2006.13279}, 2020.

\end{thebibliography}

\noindent S. Chen \& Z. Liu

\medskip

\noindent  School of Mathematical Sciences, Beijing Normal University\\
Laboratory of Mathematics and Complex Systems, Ministry of Education\\
Beijing 100875, China \\[1mm]
Email: \\
\textsf{chenshan\_lyla@mail.bnu.edu.cn}\\
\textsf{liuzixiao@mail.bnu.edu.cn}

\end{document}